  \newtheorem{The}{Theorem}[section]
  \newtheorem{Pro}[The]{Proposition}
  \newtheorem{Lem}[The]{Lemma}
  \newtheorem{Cor}[The]{Corollary}
  \newtheorem{Defs}[The]{Definitions}
  \newtheorem{Rem}[The]{Remark}
    \let\oldproofname=\proofname
   \renewcommand{\proofname}{\textit{\rm\bf\oldproofname}}
\title{\LARGE\bf Several Characterizations of Left K{\"o}the Rings \thanks {The research of the second  author was in part supported by two grants from IPM (No.1400130213), \indent\indent   and (No.1401130213). This research is partially carried out in the IPM-Isfahan Branch.}
\thanks {{\it Key Words}: Left K{\"o}the rings; K{\"o}the's problem; Finite representation type;  Square-free modules; Kawada  rings.}
\thanks {2020{ \it Mathematics Subject Classification}. Primary 16D70, 16G60, 16D90; Secondary 16D10,  16P20. }}
\author{{\bf {\bf Shadi Asgari}$^{{\rm b}}$, Mahmood Behboodi$^{{\rm a,b,}}$\thanks {Corresponding author.}\hspace{1mm} and {\bf Somayeh Khedrizadeh}$^{{\rm a}}$}  \\
{\small{ $^{{\rm a}}$Department of Mathematical Sciences, Isfahan University of Technology}}\vspace{-1mm}\\
{\small{ P.O.Box :  84156-83111,   Isfahan,   Iran}}\\
{\small{ $^{{\rm b}}$School of Mathematics, Institute for Research in Fundamental Sciences
(IPM)}}\vspace{-1mm}\\ {\small{ P.O.Box : 19395-5746, Tehran, Iran}}\vspace{-1mm}\\
   {\small{sh{\_}asgari@ipm.ir}}\vspace{-1mm}\\
   {\small{mbehbood@iut.ac.ir}}\vspace{-1mm}\\
   {\small{s.khedrizadeh@math.iut.ac.ir}}}
  \date{}
\begin{document}
  \maketitle
 \vspace*{-0.5cm}
 \begin{abstract}
 \small{
\noindent We study the classical K{\"o}the's problem, concerning the structure of non-commutative rings with the property that:  ``every left  module is a direct sum of cyclic modules".  In 1934,  K{\"o}the  showed that left modules over Artinian principal ideal rings are direct sums of cyclic modules. A ring $R$ is called a {\it left   K{\"o}the ring}  if every left   $R$-module is a direct sum of cyclic $R$-modules. In 1951, Cohen and Kaplansky  proved that all commutative K{\"o}the rings are Artinian principal ideal rings. During the years 1962 to 1965,  Kawada solved the K{\"o}the's problem for basic fnite-dimensional algebras: Kawada's theorem characterizes completely those finite-dimensional algebras for which any indecomposable module has square-free socle and square-free top, and describes the possible indecomposable modules. But, so far,  the K{\"o}the's problem is open in the non-commutative setting. In this paper,  we break the  class of left K{\"o}the rings  into three categories of nested:  {\it  left K{\"o}the rings},  {\it  strongly left K{\"o}the rings} and   {\it  very strongly left K{\"o}the rings}, and then, we solve the K{\"o}the's problem by giving several characterizations of these rings in terms of describing the indecomposable modules. Finally, we give a new generalization of K{\"o}the-Cohen-Kaplansky theorem.}
      \end{abstract}
      
 \section{\bf Introduction}
An old problem in noncommutative ring theory is to determine rings $R$ whose left modules are direct sums of cyclic modules (see [17, Question 15.8] and [27, Appendix B, Problem 2.48]). In 1934,  K{\"o}the  [22]  showed that over an Artinian principal ideal ring, each module is a direct sum of cyclic modules. A ring for which any left (resp., right) module is a direct sum of cyclic modules, is now called a {\it left}  (resp., {\it right})  {\it K{\"o}the  ring}, and characterizing such rings is called the K{\"o}the's problem. In 1941, Nakayama [25, 26] introduced the notion of generalized uniserial rings as a generalization of Artinian principal ideal  rings, and proved that generalized uniserial rings are  K{\"o}the rings (a ring $R$  is called a {\it generalized uniserial ring}, if $R$ has a unit element and every left ideal $Re$ as well as every right ideal $eR$  generated by a primitive idempotent element $e$ possesses only one composition series). However, as shown by Nakayama, the rings of this type are not general enough for solving the K{\"o}the's problem (see  [26, Page 289]).  In 1951, Cohen and Kaplansky [9] proved that all commutative K{\"o}the rings are Artinian principal ideal rings. Thus,  by combining the above results it is obtained that:

  \begin{The}
  {\rm (K{\"o}the,  [9])}. An Artinian principal ideal ring is a K{\"o}the ring.
   \end{The}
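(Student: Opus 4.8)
The plan is to deduce the theorem from the result of Nakayama, recalled above, that every generalized uniserial ring is a (left and right) K\"othe ring. With that in hand, the whole content reduces to the assertion that \emph{every Artinian principal ideal ring $R$ is generalized uniserial}: if so, then Nakayama's theorem immediately gives that every left $R$-module is a direct sum of cyclic modules. This inclusion is proper --- there are generalized uniserial rings having non-principal one-sided ideals --- so we only use the easy direction of the relationship, and all the work lies in exhibiting the uniserial structure of $R$.

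To do that, write $1 = e_1 + \cdots + e_n$ as a sum of orthogonal primitive idempotents. By the left--right symmetry of the hypothesis it suffices to show that each $Re_i$ has a unique composition series (the argument for each $e_iR$ is the same). Every submodule of $Re_i$ is a left ideal of $R$, hence principal, hence cyclic, and the aim is to prove that the submodule lattice of $Re_i$ --- finite, since $R$ is Artinian --- is a chain. Because $e_i$ is primitive, $Re_i$ is a local module with radical $Je_i$, where $J = J(R)$ is nilpotent; descending the radical filtration $Re_i \supsetneq Je_i \supsetneq J^2e_i \supsetneq \cdots \supsetneq J^m e_i = 0$, it is enough to show that each factor $J^k e_i / J^{k+1} e_i$ is simple, for then that filtration is a composition series and, being the radical filtration, is forced to be the only one. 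Since $J^k e_i$ is cyclic, the factor $J^k e_i / J^{k+1} e_i$ is a semisimple quotient of $R/J$; the crux is that having \emph{all} one-sided ideals principal forces this quotient to be simple rather than merely semisimple --- equivalently, it forces the quiver of the basic ring of $R$ to have at most one arrow entering and at most one arrow leaving each vertex, which is exactly the combinatorial description of generalized uniserial rings. This last implication is where the principal-ideal hypothesis gets used; concretely one analyses the $R/J$-bimodule $J/J^2$ and observes that principal generation of $J$ on each side pins down its multiplicities.

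The step I expect to be the main obstacle is precisely this: that principal generation of one-sided ideals is strong enough to collapse each top $J^k e_i / J^{k+1} e_i$ to a simple module; once the quiver is known to be a disjoint union of linear and cyclic quivers, generalized uniseriality, and hence the conclusion, follows formally from Nakayama's theorem. If one preferred not to treat Nakayama's theorem as a black box and argue directly, the difficulty would simply relocate into that theorem itself: over a generalized uniserial ring there are only finitely many indecomposable modules --- all finitely generated uniserial --- and the real point is that an \emph{arbitrary}, possibly infinitely generated, module is a direct sum of copies of these. Establishing that, that is, passing from the finitely generated case (a truncated analogue of the structure theorem over a discrete valuation ring) to arbitrary modules, calls for a pure-submodule / ``good system of generators'' argument of the type later systematized in the Ringel--Tachikawa and Auslander theory of rings of finite representation type, and it is there that the genuine depth of K\"othe's 1934 theorem lies.
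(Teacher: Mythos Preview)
The paper does not give its own proof of this theorem. Theorem~1.1 is stated in the introduction as a classical result attributed to K\"othe (with a citation), alongside the K\"othe--Cohen--Kaplansky and Behboodi~et~al.\ results, all of which serve as historical background and motivation; no argument is supplied anywhere in the paper. So there is nothing to compare your attempt against.

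That said, your proof sketch is sound and in fact matches the narrative the paper itself gives in the surrounding prose: the introduction explicitly notes that Nakayama introduced generalized uniserial rings ``as a generalization of Artinian principal ideal rings'' and proved that generalized uniserial rings are K\"othe rings. Your reduction---show that an Artinian principal ideal ring is generalized uniserial, then invoke Nakayama---is exactly this route, and the outline you give for the first step (analyse the radical layers $J^k e_i / J^{k+1} e_i$ and use principality on both sides to force them simple, equivalently to force the quiver of the basic ring to have in- and out-degree at most one at every vertex) is the standard way to carry it out. The step you flag as the main obstacle is indeed where the content lies, and your identification of it is accurate; but since the paper simply quotes the theorem, there is no alternative argument here for you to contrast with.
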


   \begin{The}
  {\rm (K{\"o}the-Cohen-Kaplansky, [9, 22])}.   A commutative ring $R$ is a K{\"o}the ring if and only if $R$ is an Artinian principal ideal ring.
   \end{The}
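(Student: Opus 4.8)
The plan: the implication ($\Leftarrow$) is exactly the statement, proved above, that an Artinian principal ideal ring is a K{\"o}the ring, so the whole problem is the converse. Assume then that $R$ is a commutative K{\"o}the ring and aim to show it is an Artinian principal ideal ring. I would first record two closure properties. Every quotient $R/\mathfrak a$ is again a K{\"o}the ring: writing an $R/\mathfrak a$-module $M$ as an $R$-module $M=\bigoplus_i Rx_i$ with each $Rx_i$ cyclic, each $Rx_i$ is killed by $\mathfrak a$, hence is a cyclic $R/\mathfrak a$-module, and $M=\bigoplus_i (R/\mathfrak a)x_i$ over $R/\mathfrak a$; consequently every direct factor of $R$ is a K{\"o}the ring. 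Next I would show $\dim R=0$. For any prime $\mathfrak p$ the domain $D:=R/\mathfrak p$ is a K{\"o}the ring; if $D$ were not a field, then $K:=\mathrm{Frac}(D)$ would be a nonzero $D$-module which is uniform --- any two nonzero cyclic submodules $aD,bD$ meet, since writing $a^{-1}b=u/v$ with $u,v\in D$ gives $0\neq ua=vb\in aD\cap bD$ --- hence indecomposable, yet not finitely generated over $D$ (else $D$ would be a field); so $K$ could not be written as a direct sum of cyclic $D$-modules, a contradiction. Hence every prime of $R$ is maximal.

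Combining $\dim R=0$ with the classical fact that a commutative ring over which every module is a direct sum of finitely generated modules is Noetherian (part of the original Cohen--Kaplansky analysis, and related to Chase's direct-product criteria for Noetherian rings), I conclude that $R$ is Artinian, hence $R=R_1\times\cdots\times R_t$ with each $(R_i,\mathfrak m_i)$ Artinian local and --- by the closure property --- itself a K{\"o}the ring. Since a finite product of principal ideal rings is a principal ideal ring, it suffices to treat the local case: if $(R,\mathfrak m,k)$ is a commutative Artinian local K{\"o}the ring, then $\mathfrak m$ is principal. If $\mathfrak m=0$ this is trivial, so assume $\mathfrak m\neq 0$ and decompose $\mathfrak m$, as an $R$-module, into cyclics $\mathfrak m=Rc_1\oplus\cdots\oplus Rc_n$ with all $c_i\neq 0$. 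Reducing modulo $\mathfrak m^2$ gives $\mathfrak m/\mathfrak m^2\cong\bigoplus_i Rc_i/\mathfrak m c_i$, each summand being $\cong k$ by Nakayama, so $n=\dim_k\mathfrak m/\mathfrak m^2$; and since $\mathrm{Soc}(R)=(0:_R\mathfrak m)\subseteq\mathfrak m$ (as $\mathfrak m\neq 0$), taking socles gives $\mathrm{Soc}(R)=\bigoplus_i\mathrm{Soc}(Rc_i)$ with each $\mathrm{Soc}(Rc_i)\neq 0$, whence $\dim_k\mathrm{Soc}(R)\geq n$. Now suppose $n\geq 2$. Then $\dim_k\mathrm{Soc}(R)\geq 2$, while the injective hull $E:=E_R(k)$ is uniform (as $k$ is simple and essential in it), hence indecomposable, so the K{\"o}the hypothesis forces $E\cong R/I$ to be cyclic. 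But then $R/I$, being $R$-injective and an $(R/I)$-module, is self-injective, and since $\ell_R(E_R(k))=\ell_R(R)$ we get $I=0$; thus $R\cong E_R(k)$ is a commutative self-injective Artinian local ring and hence has simple socle, contradicting $\dim_k\mathrm{Soc}(R)\geq 2$. Therefore $n\leq 1$ and $\mathfrak m$ is principal.

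I expect the real content to be concentrated in the local Artinian step, and there in the single inequality $\dim_k\mathrm{Soc}(R)\geq\dim_k\mathfrak m/\mathfrak m^2$ squeezed out of a cyclic decomposition of $\mathfrak m$: this is precisely what kills off the Gorenstein local rings of embedding dimension $\geq 2$ (typically $k[x,y]/(x^2,y^2)$), for which no small indecomposable non-cyclic module is in sight and for which the injective-hull argument by itself collapses because $E_R(k)\cong R$. The remaining pieces --- closure under quotients, the behaviour of $\mathrm{Frac}(D)$, and above all the reduction to the Artinian case --- are standard; the last of these is the one external input I would import rather than reprove here.
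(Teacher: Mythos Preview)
The paper does not supply its own proof of this statement: Theorem~1.2 is recorded in the introduction as the classical K\"othe--Cohen--Kaplansky theorem, with citations to [9,~22], and serves purely as background and motivation for the paper's non-commutative generalizations. There is thus no in-paper argument to compare your proposal against.

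That said, your proof is correct. A couple of remarks. First, the paper's own machinery would let you shorten the reduction to the Artinian case: rather than proving $\dim R=0$ separately and then importing Noetherianity, you could invoke Chase's result (cited in the paper as [8, Theorem~4.4] and used repeatedly in Section~2) that any left K\"othe ring is left Artinian, which for commutative $R$ gives Artinian outright. Second, your local step is a nice self-contained argument: the inequality $\dim_k\mathrm{Soc}(R)\ge\dim_k(\mathfrak m/\mathfrak m^2)$ extracted from a cyclic decomposition of $\mathfrak m$, together with the Matlis-duality length equality $\ell_R(E_R(k))=\ell_R(R)$ forcing $R\cong E_R(k)$ (and hence simple socle) once $E_R(k)$ is cyclic, cleanly disposes of the Gorenstein obstruction you correctly identify. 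This is in the spirit of, though organized somewhat differently from, the original Cohen--Kaplansky argument; the paper itself later recovers the commutative case as a special instance of its Proposition~4.6 on Abelian rings, via an entirely different route through [7], [30], and [31].
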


\noindent However, a left Artinian principal left ideal ring $R$ need not be a left  K{\"o}the ring, even if $R$ is a local ring (see [11, page 212, Remark (2)]). During the years 1962 to 1965,  Kawada [19, 20, 21] solved the K{\"o}the's problem for basic finite-dimensional algebras. Kawada's theorem characterizes completely those finite-dimensional algebras for which any indecomposable module has square-free socle and square-free top (called Kawada algebras), and describes the possible indecomposable modules. In fact, Kawada's theorem, contains a set of 19 conditions which characterize Kawada algebras, as well as the list of the possible indecomposable modules (see [18], for more ditals of 19 conditions). This seems to be the most elaborate result of the classical representation theory. Faith [11] characterized all commutative rings whose proper factor rings are K{\"o}the rings. Behboodi et al. [7] showed that  if $R$ is an Abelian left K{\"o}the ring, then $R$ is an Artinian principal right ideal ring (a ring in which all idempotents are central is called {\it Abelian}). So, it is obtained that

\begin{The}\label{abelian Kothe rings}
{\rm ([7])}. An Abelian ring $R$ is a K{\"o}the ring if and only if $R$ is an Artinian principal ideal ring. 
\end{The}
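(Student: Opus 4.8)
The plan is to prove the two implications separately. The reverse implication is immediate from the K{\"o}the theorem stated above, and the forward implication is essentially a repackaging of the result of Behboodi et al. quoted just before the statement, combined with a left--right symmetry argument; the ``Abelian'' hypothesis enters only there.

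\emph{Sufficiency.} If $R$ is an Artinian principal ideal ring then $R$ is a K{\"o}the ring by Theorem~1.1 (the Abelian hypothesis is not even needed for this direction), so nothing further is required.

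\emph{Necessity.} Suppose $R$ is an Abelian K{\"o}the ring, i.e. $R$ is Abelian and every left $R$-module and every right $R$-module is a direct sum of cyclic modules. Since $R$ is in particular an Abelian left K{\"o}the ring, the cited theorem of Behboodi et al. applies and shows that $R$ is an Artinian principal right ideal ring. For the dual conclusion I would pass to the opposite ring $R^{\mathrm{op}}$: the idempotents of $R^{\mathrm{op}}$ are exactly those of $R$, and an idempotent is central in $R^{\mathrm{op}}$ if and only if it is central in $R$, so $R^{\mathrm{op}}$ is again Abelian; moreover $R^{\mathrm{op}}$ is a left K{\"o}the ring precisely because $R$ is a right K{\"o}the ring. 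Applying the cited theorem to $R^{\mathrm{op}}$ gives that $R^{\mathrm{op}}$ is an Artinian principal right ideal ring, that is, $R$ is an Artinian principal left ideal ring. Combining the two, $R$ is left and right Artinian and every one-sided ideal of $R$ is principal, so $R$ is an Artinian principal ideal ring.

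Once the theorem of [7] is granted there is essentially no obstacle; the only points to check are that ``K{\"o}the ring'' is being used in its two-sided form, so that the left--right dual of [7] is legitimately available, and that the property of being Abelian is preserved under $R \mapsto R^{\mathrm{op}}$. If instead one insisted on deducing the conclusion from the purely left-sided hypothesis, the genuine work would be to show that an Abelian left K{\"o}the ring --- being left Artinian, a finite direct product of local left K{\"o}the rings --- has each local direct factor principal as a \emph{left} ideal ring as well; establishing this would require analysing the radical filtration of a local left K{\"o}the ring, and that is the only step which is not formal bookkeeping around [7] and Theorem~1.1.
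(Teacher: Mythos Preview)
Your proposal is correct and matches the paper's treatment: the paper does not give a standalone proof of this theorem but simply records it as a consequence of the cited result from [7] (Abelian left K{\"o}the $\Rightarrow$ Artinian principal right ideal ring) together with Theorem~1.1, and your argument is precisely the unpacking of that derivation via the opposite-ring symmetry. Your final paragraph about the purely one-sided hypothesis is extra commentary not present in the paper, but it does not detract from the proof.
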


\noindent This generalizes K{\"o}the-Cohen-Kaplansky theorem. 
In this paper, we solve the K{\"o}the's problem, for a ring without any conditions, by giving several characterizations in terms of describing the indecomposable modules. Also, we give a generalization of Theorem \ref{abelian Kothe rings}.

Throughout this paper, all rings have an identity element and all modules are unital. A K{\"o}the (resp. Noetherian, Artinian) ring is a ring which is both a left and right K{\"o}the (resp. Noetherian, Artinian) ring. A principal ideal ring is a ring which is both a left and a right principal ideal ring. For a module $M$, the {\it top} of $M$, denoted by $top(M)$, is the factor module $M/Rad(M)$ where $Rad(M)$ denotes the radical of $M$. The module $M$ is called {\it square-free} if it does not contain a direct sum of two nonzero isomorphic submodules. In Section 2, we provide several characterizations for a left K{\"o}the ring. Among these, it is proved that $R$ is a left K{\"o}the ring if and only if $R$ is of finite representation type and every (finitely generated) indecomposable left $R$-module has a cyclic top (see Theorem \ref{left Kothe}). In the sequel, we introduce and study the following concepts. Using them, we obtain more characterizations of K{\"o}the rings. 
 
\begin{Defs}
{\rm We say that a ring $R$  is a: \vspace*{1mm} \\
{\it strongly left}  (resp., {\it right})  {\it K{\"o}the ring}    if every  nonzero left  (resp.,  right)  $R$-module is a  direct sum of  modules  with nonzero  square-free  cyclic top. \vspace*{1mm} \\
{\it very  strongly  left}   (resp., {\it right}) {\it  K{\"o}the ring}  if every  nonzero left (resp.,  right)   $R$-module is a  direct sum of   modules with  simple top. \vspace*{1mm} \\
{\it strongly  K{\"o}the ring}  if $R$ is both  a  strongly left and  a  strongly right  K{\"o}the ring. \vspace*{1mm} \\
{\it very strongly  K{\"o}the ring}  if $R$ is both  a   very  strongly left and   right K{\"o}the  ring.}
\end{Defs}

\noindent It will be shown that these types of rings are left K{\"o}the (see Theorem \ref{left Kothe}), so we have the following inclusion relationships: \vspace*{1mm} \\
\indent Very strongly left K{\"o}the rings $\subsetneqq$ Strongly left K{\"o}the rings $\subsetneqq$ Left K{\"o}the rings. \vspace*{1mm} \\
Several equivalent conditions for the concept of a strongly left K{\"o}the ring are given in Section 3 (see Theorem \ref{strongly left Kothe}). We show that the notions of a left K{\"o}the ring and a strongly left K{\"o}the ring are the same for left quasi-duo rings (see Theorem \ref{quasi duo left Kothe}). In Section 4, we characterize very strongly left K{\"o}the rings and prove that very strongly K{\"o}the rings are exactly Artinian serial rings (see Theorems \ref{left local type} and \ref{co-Kothe}). It is shown that for Abelian rings, the notions of a K{\"o}the ring, a strongly K{\"o}the ring, a very strongly K{\"o}the ring and an Artinian principal ideal ring coincide (see Proposition \ref{Cor prime ideals commutes-Abelian}). Therefore, an Abelian ring $R$ is Artinian serial if and only if $R$ is an Artinian principal ideal ring. This generalizes a well-known result of Asano stating that a commutative ring $R$ is Artinian serial if and only if $R$ is an Artinian principal ideal ring (see [2, 3]). Moreover, the equivalency of the notions of a very strongly K{\"o}the ring and an Artinian serial ring is a generalization of Theorem \ref{abelian Kothe rings}, and so it is a new generalization of K{\"o}the-Cohen-Kaplansky theorem. 

\newpage
 \section{\bf  Charactrizations of left K{\"o}the rings}

In this section, we solve the left K{\"o}the's problem by giving several characterizations in terms of describing the indecomposable modules. Recall that a submodule $N$  of an $R$-module $M$ is called an {\it $RD$-pure submodule} if  $N\cap rM=rN$ for all
$r\in R$. Every direct summand of $M$ is a $RD$-pure submodule (we note that in [8], Chase has used the term ``pure submodule" instead of ``RD-pure submodule").

The following lemmas  play  an important role in the sequel. 

\begin{Lem}\label{chase}\textup {(Chase [8, Theorem 3.1])}
Let $R$ be a ring, and $A$  an infinite set of cardinality $\zeta$ where
$\zeta\geq card(R)$. Set $M =\prod_{\alpha\in A}R^{(\alpha)}$, where $R^{(\alpha)}\cong R$ is a left $R$-module. Suppose
that $M$ is an $RD$-pure submodule of a left $R$-module of the form $N=\bigoplus_\beta N_\beta$, where
each $N_\beta$ is generated by a subset of cardinality less than or equal to $\zeta$. Then $R$
must satisfy the descending chain condition (DCC) on principal right ideals. \\
\end{Lem}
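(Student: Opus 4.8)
This is a theorem of Chase, so the plan is to follow his strategy and argue by contradiction from the failure of the chain condition. Suppose $R$ does \emph{not} satisfy the DCC on principal right ideals; fix a strictly descending chain $b_0R\supsetneq b_1R\supsetneq b_2R\supsetneq\cdots$, and write $b_0=1$, $b_n=b_{n-1}a_n$ with $a_n\in R$, so that $b_n\notin b_{n+1}R$ for all $n$ and, since $b_1R\subsetneq R$, $1\notin b_nR$ for every $n\ge1$. As $A$ is infinite, fix a countably infinite subset $\{\alpha_0,\alpha_1,\dots\}\subseteq A$. Then the partial product $P=\prod_{i\ge0}R^{(\alpha_i)}$ is a direct summand of $M$, hence also an $RD$-pure submodule of $N$ (a direct summand of an $RD$-pure submodule is again $RD$-pure), while the partial direct sum $D=\bigoplus_{i\ge0}R^{(\alpha_i)}\subseteq P$ is countably generated and therefore lies inside $N_S:=\bigoplus_{\beta\in S}N_\beta$ for a countable set $S$. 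The cardinality hypotheses first enter here: since each $N_\beta$ is generated by at most $\zeta$ elements and $\zeta\ge\operatorname{card}(R)$, each $N_\beta$ --- and hence $N_S$ --- has at most $\zeta$ elements.

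Next I would introduce the telescoping elements of the product. For $k\ge0$ let $v_k\in P$ be the element whose $\alpha_i$-coordinate is $a_{k+1}a_{k+2}\cdots a_i$ when $i\ge k$ (the empty product $1$ when $i=k$) and $0$ when $i<k$. A direct check gives the recursion $v_k=\mathbf 1_{\alpha_k}+a_{k+1}v_{k+1}$, where $\mathbf 1_{\alpha_k}$ is the unit vector at the coordinate $\alpha_k$; iterating it, $v_0=s_n+b_nv_n$ with $s_n=\sum_{j<n}b_j\mathbf 1_{\alpha_j}\in D$. Consequently, modulo $D$ one has $\overline{v_0}=b_n\overline{v_n}$ for \emph{every} $n$ and $\overline{v_k}=a_{k+1}\overline{v_{k+1}}$; equivalently, $g_k\mapsto\overline{v_k}$ embeds the flat left $R$-module $\varinjlim\bigl(R\xrightarrow{\,\cdot a_1\,}R\xrightarrow{\,\cdot a_2\,}\cdots\bigr)$ into $M/D$, carrying its ``bottom'' generator to an element divisible by all the $b_n$ (this is the flat, non-projective left module underlying Bass's theory of perfect rings, which is no accident).

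The decisive step --- and the one I expect to be the genuine obstacle --- is to push this divisibility back into $M$. Composing $M\hookrightarrow N$ with the projection onto $N':=N/N_S\cong\bigoplus_{\beta\notin S}N_\beta$, which kills $D$, yields a homomorphism $\varphi\colon M/D\to N'$, and $\varphi(\overline{v_0})$ lies in a finite partial sum $N'_T=\bigoplus_{\beta\in T}N_\beta$, a direct summand of $N'$; since $\varphi(\overline{v_0})=b_n\varphi(\overline{v_n})$, we get $\varphi(\overline{v_0})\in b_nN'_T$ for every $n$. One now wants to upgrade this to the assertion that $v_0$ --- an element with a unit coordinate --- must lie in $b_nM$ for some $n\ge1$. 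This is not formal, because $RD$-purity of $M$ in $N$ controls divisibility in $N$, not in the quotient $N/N_S$, so one must genuinely exploit that only \emph{boundedly many}, and \emph{boundedly large}, summands $N_\beta$ are ever involved --- this, and only this, is where the hypotheses ``$N_\beta$ is $\le\zeta$-generated'' and ``$\zeta\ge\operatorname{card}(R)$'' do any work. The mechanism is a Specker-type diagonalization: inductively build an increasing chain $T=T_0\subseteq T_1\subseteq\cdots$ of finite index sets together with divisors realizing $\varphi(\overline{v_0})\in b_nN'_{T_n}$, and lift them through $\varphi$ using $RD$-purity and the fact that $\bigcup_nN_{T_n}$ still has at most $\zeta$ elements, so as to force $v_0\in b_nM$ for some $n\ge1$. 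But $(v_0)_{\alpha_0}=1\notin b_nR$, so $v_0\notin b_nM$ --- a contradiction. Hence $R$ satisfies the DCC on principal right ideals. (That the smallness of the $N_\beta$ is essential is clear: $M$ is trivially $RD$-pure in itself, with no DCC consequence.)
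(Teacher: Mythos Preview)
The paper does not prove this lemma at all: it is stated with the attribution ``(Chase [8, Theorem 3.1])'' and used as a black box, with no argument given. So there is nothing in the paper to compare your proposal against --- your reconstruction goes well beyond what the authors supply.

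As for the reconstruction itself: the setup (the descending chain $b_nR$, the telescoping elements $v_k$, the relation $v_0=s_n+b_nv_n$, and the passage to $N/N_S$) is exactly the skeleton of Chase's argument. But you have correctly identified, and then not resolved, the genuine obstacle. Knowing that the image of $v_0$ in $N/N_S$ is divisible by every $b_n$ does not by itself force $v_0\in b_nM$ for some $n$; $RD$-purity gives you control in $N$, not in $N/N_S$. Your proposed ``Specker-type diagonalization'' with an increasing chain $T_0\subseteq T_1\subseteq\cdots$ is gestured at but not carried out, and as written it is not clear how the lifting step would work or why the cardinality bound $|N_S|\le\zeta$ is the lever that closes the argument. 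Chase's actual proof uses the full index set $A$ of cardinality $\zeta$ (not merely a countable subset) precisely so that a counting argument --- there are more coordinates than elements of the relevant finite partial sums of $N$ --- produces coordinates on which the projection to $N_S$ must vanish, and \emph{that} is where the contradiction with $1\notin b_nR$ comes from. Your restriction to a countable $P$ discards the cardinality leverage you need; the sketch would need to be reworked over all of $A$ to become a proof.
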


\begin{Lem}\label{Zimmermann} \textup {(B. Zimmermann-Huisgen-W. Zimmermann [34, Page 2])}
For a ring $R$ the following statments are equivalent. \vspace*{0.2cm} \\
\noindent {{\rm (1)}} Each left $R$-module is a direct sum of finitely generated modules.\\
\noindent {{\rm (2)}} There exists a cardinal number $\aleph$ such that each left  $R$-module is a direct sum of  \indent   ${\aleph}{-\rm generated ~modules.}$\\
\noindent {{\rm (3)}} Each left  $R$-module is a direct sum of indecomposable modules (R is left pure semisim-\indent ple). 
\end{Lem}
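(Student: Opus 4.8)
The plan is to prove the cycle $(1)\Rightarrow(2)\Rightarrow(3)\Rightarrow(1)$. The implication $(1)\Rightarrow(2)$ is immediate, since a finitely generated module is generated by a countable set, so $(1)$ yields $(2)$ with $\aleph=\aleph_0$; the two substantive steps are $(2)\Rightarrow(3)$ and $(3)\Rightarrow(1)$.

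For $(2)\Rightarrow(3)$ I would route everything through Lemma~\ref{chase}. If $\aleph$ is finite there is nothing to prove, so assume $\aleph$ infinite and fix an infinite cardinal $\zeta\ge\max\{\aleph,card(R)\}$. By $(2)$, the module $M=\prod_{\alpha\in A}R^{(\alpha)}$ with $card(A)=\zeta$ and $R^{(\alpha)}\cong{}_RR$ is a direct sum of $\aleph$-generated --- hence $(\le\zeta)$-generated --- submodules, while $M$ is trivially an $RD$-pure submodule of itself; Lemma~\ref{chase} then forces the DCC on principal right ideals, i.e.\ $R$ is right perfect. Combining right-perfectness with further consequences of $(2)$ (the usual Chase-type analysis of products of projective modules) one deduces that $R$ is in fact left Artinian, so finitely generated left modules have finite length and therefore split into indecomposables. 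It remains to pass from the finitely generated modules to arbitrary ones: the crux is to show that $(2)$ forces every pure-exact sequence of left $R$-modules to split --- equivalently, every left $R$-module to be pure-injective --- after which each left module is the direct sum of its indecomposable summands and $(3)$ holds. This last step, being exactly where the genuine strength of $(2)$ (as opposed to mere chain conditions) is used, is the step I expect to be the main obstacle.

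For $(3)\Rightarrow(1)$ one must show that every indecomposable left $R$-module is finitely generated. Granting $(3)$, every left module --- in particular $N^{(\mu)}$ for a fixed indecomposable $N$ and arbitrarily large cardinals $\mu$ --- is a direct sum of indecomposables, so by Azumaya's theorem $\mathrm{End}_R(N)$ is local; one then derives, as above, that $R$ is left Artinian, and shows that an infinitely generated such $N$ would yield an $RD$-pure copy of a large product $\prod_{\alpha\in A}R^{(\alpha)}$ inside a direct sum of small modules, contradicting Lemma~\ref{chase}. Hence $N$ is finitely generated and $(1)$ follows, closing the cycle. The recurring engine is Lemma~\ref{chase}, which converts ``a big product of copies of $R$ decomposes into small pieces'' into a chain condition; turning this into a genuine finitization of the decompositions is what forces one into the pure-injectivity argument above, and that is the hard core of the result.
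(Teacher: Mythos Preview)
The paper does not prove this lemma at all: it is stated with attribution to Zimmermann-Huisgen and Zimmermann [34, Page~2] and used as a black box. So there is no in-paper argument to compare your proposal against; the authors simply quote the result from the literature.

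As for your sketch itself, the easy implication $(1)\Rightarrow(2)$ and the initial invocation of Lemma~\ref{chase} in $(2)\Rightarrow(3)$ are fine, but the two places you flag as ``substantive'' are genuinely unproved in your outline, and in one case the proposed mechanism is wrong. For $(2)\Rightarrow(3)$ you correctly isolate the crux --- showing that every left $R$-module is pure-injective (equivalently, that $_RR$ is left $\Sigma$-pure-injective) --- but this does \emph{not} follow from left Artinianness plus Chase's lemma; it needs the finiteness criteria for $\Sigma$-pure-injectivity (descending chain conditions on $pp$-definable subgroups, or the functor-category/Auslander--Gruson--Jensen approach), which is exactly the content of the Zimmermann--Zimmermann-Huisgen circle of results. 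For $(3)\Rightarrow(1)$ your plan has a concrete gap: an infinitely generated indecomposable $N$ gives you no obvious way to manufacture an $RD$-pure copy of a large product $\prod_{\alpha}R^{(\alpha)}$ inside a direct sum of small modules, so Lemma~\ref{chase} cannot be invoked as you suggest. The actual route is that left pure-semisimplicity makes every module $\Sigma$-pure-injective, whence indecomposable (pure-injective) modules are the hulls of finitely presented ones and are themselves finitely presented over the resulting left Artinian ring. In short, your proposal correctly identifies Chase's lemma as the tool for extracting chain conditions, but the heart of the theorem lies in the $\Sigma$-pure-injectivity machinery that your outline defers, and this is precisely why the paper cites [34] rather than reproving it.
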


Let $M$ be a left $R$-module.  By [32, Proposition 21.6 (4)], if $Rad(M)$ is a superfluous submodule of $M$,  then $M$ is finitely generated if and only if $top(M)$ is finitely generated. Similar to this, we have the following lemma. 

\begin{Lem}\label{square-free top} 
Let $M$ be a  left $R$-module such that $Rad(M)$ is a superfluous submodule of $M$. Then $M$ is cyclic if and only if $top(M)$  is  cyclic.
\end{Lem}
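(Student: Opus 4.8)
The plan is to treat the two implications separately; only the converse will use the hypothesis on $Rad(M)$.

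For the forward direction, suppose $M = Rm$ for some $m \in M$, and let $\pi\colon M \to M/Rad(M) = top(M)$ be the canonical projection. Then $top(M) = \pi(M) = \pi(Rm) = R\,\pi(m)$, so $top(M)$ is cyclic. No assumption on $Rad(M)$ is needed here.

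For the converse, assume $top(M)$ is cyclic and choose $m \in M$ whose image $m + Rad(M)$ generates $M/Rad(M)$. Then every element of $M$ differs from an element of $Rm$ by an element of $Rad(M)$, i.e. $M = Rm + Rad(M)$. Now apply the hypothesis that $Rad(M)$ is superfluous (small) in $M$: taking the submodule $N = Rm$, the equality $N + Rad(M) = M$ forces $N = M$, that is, $M = Rm$ is cyclic.

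There is no real obstacle here; the argument is the $n = 1$ analogue of the standard fact [32, Proposition 21.6(4)] cited just before the statement, with ``finitely generated'' replaced throughout by ``cyclic'', and the only ingredient beyond bookkeeping is the definition of a superfluous submodule.
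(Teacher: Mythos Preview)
Your proof is correct and follows essentially the same route as the paper's: pick a lift $m$ of a cyclic generator of $top(M)$, observe $M = Rm + Rad(M)$, and use superfluousness to conclude $M = Rm$. The paper only writes out the nontrivial (converse) direction, but the argument is identical to yours.
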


\begin{proof}
Let $top(M)=M/Rad(M)$ be cyclic with the generating element $x + Rad(M)$, where $x\in M$. Then for each $m \in M$,  $m + Rad(M) = rx + Rad(M) $ for some $r \in R$. Hence, $m - rx \in Rad(M)$ and so $m = (m - rx) + rx\in
Rad(M) + Rx$. Thus, $Rad(M) + Rx = M$  and so $M = Rx$, since $Rad(M)$ is superfluous
in $M$.
\end{proof}

A ring $R$ is called {\it semiperfect}  if  $R/J$  is left semisimple and idempotents in $R/J$  can be
lifted to $R$.  Local rings and left (or right) Artinian rings are semiperfect. A ring $R$ is called left (right) {\it perfect} in case each of its left (right) modules has a projective cover. Left perfect rings and right perfect rings are both semiperfect. It is worthy of note that in a semiperfect ring,  the radical is the unique largest ideal containing no nonzero idempotents (see  [1, (15.12)]). Moreover, a semiperfect ring $R$ decomposes as a left  $R$-module into cyclic modules:  $R= {(Re_1)}^{(t_1)}  \oplus\cdots\oplus {(Re_n)}^{(t_n)}$. If $A= Re_1  \oplus\cdots\oplus Re_n$, then the category of left $R$-modules and left $A$-modules are Morita equivalent. The semi-perfect ring $R$ is said to be {\it basic}  if $t_1 =  \cdots = t_n = 1$, i.e., there are no
isomorphic modules in the decomposition of $R$ into cyclic modules. In fact, a semiperfect ring $R$  is  {\it basic} if the quotient ring $R/J$ is a direct sum of division rings. Let $M$ and $N$ be $R$-modules.  A monomorphism   $ N \rightarrow M$   is called an  {\it embedding}  of  $N$  to  $M$,   and  we denote  it by $N\hookrightarrow M$). The following proposition,  play a key role  in this paper.

\vspace*{1mm}

\begin{Pro}\label{Projective cover}
Let $R$ be a semiperfect ring and $M$ be a  finitely generated left $R$-module with the projective cover $P(M)$. Then 
$R\cong \oplus_{i=1}^n(Re_i)^{(t_i)}$,  where $n, t_i\in\Bbb{N}$ and $\{e_1,\ldots, e_n\}$ is a basic  set of idempotents     of $R,$ $R/J \cong \oplus_{i=1}^n (Re_i/Je_i)^{(t_i)}$,  $top(M)\cong \oplus_{i=1}^n (Re_{i}/Je_i)^{(s_i)}$,     for  some
 $s_1,\ldots,s_n\in\Bbb{N}\cup \{0\}$, and   $P(M) \cong \oplus_{i=1}^n(Re_i)^{(s_i)}$. Consequently, \vspace*{0.1cm} \\
 \indent  $(i)$  the followings are equivalent for $M$. \vspace*{0.1cm} \\
 \indent\indent{\rm (1)}  $M$ is  a cyclic  $R$-module.\\
 \indent\indent {\rm (2)}   $top(M)$ is a  cyclic module.\\
 \indent\indent{\rm (3)}   $M\cong Re/Ie$ for some idempotent $e\in R$ and  for some left ideal  $I\subseteq J$.\\
 \indent\indent {\rm (4)}  $Re$ is equal to the projective cover of $M$ for some  idempotent $e\in R$. \\
 \indent\indent {\rm (5)} By the notations above, we have   $s_i\leq t_i$ for all $i=1,\ldots, n$.\\
 \indent\indent {\rm (6)}   $top(M)\hookrightarrow R/J$.\vspace*{0.1cm} \\
 \indent\indent  In particular,  $top(M)$  is square-free  if and  only if $s_i \in \{0,1\}$ for all $i=1,\ldots, n$. \vspace*{0.1cm} \\
 \indent  $(ii)$   The followings are equivalent for $M$. \vspace*{0.1cm} \\
 \indent\indent{\rm (1)}  $M$ is a local module.\\
 \indent\indent{\rm (2)}  $top(M)$ is  a simple  module.\\
  \indent\indent{\rm (3)}  $P(M)$ is an indecomposable module.\\
 \indent\indent{\rm (4)}  $P(M)=Re$ for some local idempotent $e\in R$.\\
 \indent\indent{\rm (5)}  $P(M)$ is  a projective cover of a simple  module. 
 \end{Pro}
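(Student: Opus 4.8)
The plan is to set up the basic decomposition $R\cong\bigoplus_{i=1}^n(Re_i)^{(t_i)}$ once and for all (this is available for any semiperfect ring by the remarks preceding the statement), note that the $Re_i/Je_i$ are the pairwise non-isomorphic simple left $R$-modules, and then read off everything from the structure of $top(M)$ as a semisimple module over the semisimple ring $R/J\cong\bigoplus_{i=1}^n(Re_i/Je_i)^{(t_i)}$. Concretely, since $M$ is finitely generated, $top(M)$ is a finitely generated semisimple module, so $top(M)\cong\bigoplus_{i=1}^n(Re_i/Je_i)^{(s_i)}$ for uniquely determined $s_i\in\mathbb{N}\cup\{0\}$. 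For the projective cover I would use the standard fact that the projective cover of a simple module $Re_i/Je_i$ is $Re_i$ (because $Re_i$ is projective and $Je_i=Rad(Re_i)$ is superfluous in $Re_i$, as $Re_i$ is finitely generated over a semiperfect ring), that projective covers are additive over finite direct sums, and that $P(M)\cong P(top(M))$ since $Rad(M)$ is superfluous in $M$ (again because $M$ is finitely generated over a semiperfect ring, so $Rad(M)=JM$ is superfluous). This yields $P(M)\cong\bigoplus_{i=1}^n(Re_i)^{(s_i)}$ directly.

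For part $(i)$, the implications (1)$\Rightarrow$(2) is trivial and (2)$\Rightarrow$(1) is exactly Lemma \ref{square-free top} (the hypothesis $Rad(M)$ superfluous holds as above). For (2)$\Leftrightarrow$(5): $top(M)$ is cyclic iff it is a quotient of $R/J\cong\bigoplus(Re_i/Je_i)^{(t_i)}$, and since $R/J$ is semisimple, a semisimple module $\bigoplus(Re_i/Je_i)^{(s_i)}$ embeds in (equivalently, is a quotient of) $R/J$ precisely when $s_i\le t_i$ for all $i$ — this also gives (6)$\Leftrightarrow$(5) and in fact (2)$\Leftrightarrow$(6) since over a semisimple ring ``quotient of'' and ``submodule of'' coincide up to isomorphism. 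For (1)$\Rightarrow$(3): if $M=Rx$ is cyclic, then by the projective cover description $P(M)\cong Re$ for a single idempotent $e$ (combine (1)$\Rightarrow$(5) with additivity, choosing $e$ a sum of orthogonal primitive idempotents realizing the $s_i\le 1$ pattern — more carefully, one shows a cyclic module has $P(M)\cong Re$ because the epimorphism $R\twoheadrightarrow M$ factors through a projective cover which must be a summand of $R$ generated by one element); then the kernel of $Re\twoheadrightarrow M$ is a superfluous submodule of $Re$, hence contained in $Rad(Re)=Je$, giving $M\cong Re/Ie$ with $I\subseteq J$. The implications (3)$\Rightarrow$(4)$\Rightarrow$(1) are then straightforward: from $M\cong Re/Ie$ with $Ie\subseteq Je$ superfluous in $Re$ one gets $Re$ is the projective cover, and $Re$ being cyclic forces $M$ cyclic. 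The ``in particular'' clause is immediate: $top(M)$ is square-free iff no simple summand $Re_i/Je_i$ appears with multiplicity $\ge 2$, i.e. $s_i\in\{0,1\}$.

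Part $(ii)$ is the ``simple top'' analogue and follows the same template with all the $s_i$ forced into $\{0,1\}$ and exactly one of them equal to $1$: (1)$\Leftrightarrow$(2) is the definition of a local module unwound via $Rad(M)$ superfluous; (2)$\Leftrightarrow$(5) is the observation that a simple module has an indecomposable projective cover and conversely; (3)$\Leftrightarrow$(4) is that the indecomposable projective left $R$-modules over a semiperfect ring are exactly the $Re$ with $e$ local (i.e. $eRe$ local, equivalently $Re/Je$ simple); (4)$\Leftrightarrow$(5) combines these; and (3)$\Rightarrow$(2) follows since $P(M)\cong\bigoplus(Re_i)^{(s_i)}$ indecomposable forces exactly one $s_i=1$ and the rest $0$, whence $top(M)\cong Re_i/Je_i$ is simple.

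I expect the only real friction to be in pinning down the cyclic case (1)$\Rightarrow$(3)/(4) of part $(i)$ carefully — specifically, justifying that a cyclic module's projective cover is $Re$ for a \emph{single} idempotent rather than merely a direct sum of the $Re_i$'s; the clean way is to note that a surjection $R\to M$ with $M$ cyclic restricts, after composing with $R\twoheadrightarrow top(M)$ and using that $R\to R/J$ is a projective cover, to show $top(M)$ is generated by one element of $R/J$, and a cyclic semisimple module over $R/J$ has the form $(R/J)\bar e$ for an idempotent $\bar e$ which lifts to an idempotent $e\in R$ by semiperfectness, with $Re$ then serving as the projective cover. Everything else is routine bookkeeping with superfluous submodules and the semisimplicity of $R/J$.
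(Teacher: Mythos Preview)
Your proposal is correct and follows essentially the same route as the paper: the decomposition of $R$ and of $top(M)$ over the semisimple ring $R/J$, Lemma~\ref{square-free top} for $(1)\Leftrightarrow(2)$, the superfluousness of $Rad(M)=JM$, and the identification $P(M)\cong P(top(M))$ are exactly what the paper uses (packaged there as citations to Anderson--Fuller [1, Lemma 27.3, Proposition 27.10] and Hazewinkel--Gubareni [16] rather than spelled out). Your expanded argument for $(1)\Rightarrow(3)$ via lifting an idempotent from $R/J$ is precisely the content of [1, Lemma 27.3], and your treatment of part $(ii)$ unpacks what the paper defers to [16, Theorem 1.9.4 and Proposition 1.9.6].

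One small wording slip: in the parenthetical for $(1)\Rightarrow(3)$ you write ``realizing the $s_i\le 1$ pattern,'' but what you mean (and what $(5)$ actually gives) is $s_i\le t_i$; this is what allows you to pick $s_i$ of the $t_i$ orthogonal copies of $e_i$ inside the decomposition of $1\in R$ and sum them to a single idempotent $e$ with $Re\cong\bigoplus_i(Re_i)^{(s_i)}$. Your ``more carefully'' clause and final paragraph already give the clean version of this, so the argument stands.
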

 
\begin{proof}  The first statement is  by [1, Corollary 15.18, Theorems 27.6,  27.13   and Proposition 27.10].

(i) $(1) \Leftrightarrow (2).$  It is by Lemma \ref{square-free top}, since $M$ is finitely generated, and so by   [32, Proposition 21.6 (4)], $Rad(M)$ is superfluous in $M$ .

    (i) $(1) \Leftrightarrow (3).$  Assume that $M$ is a cyclic  left $R$-module. Since $R$ is  a semiperfect ring,  $M$ has a   projective cover by [1, Corollary  27.6].  Also  by [1, Lemma 27.3],
   $M\cong Re/Ie$ for some idempotent $e\in R$ and some left ideal $I \subseteq J$,  and  the natural map
$Re\rightarrow   Re/Ie\rightarrow  0$  is a projective cover  of $M$.

(i) $(3) \Rightarrow (4).$  By [1, Lemma 27.3], for some idempotent $e$ of $R$ and  some left ideal $I \subseteq J$,  the natural map $Re\rightarrow   Re/Ie\rightarrow  0$  is a projective cover  of $M$.

 (i) $(4) \Rightarrow (1).$ Since  $e$ is an  idempotent in $R$, so $Re$ is a direct summand of $_RR$. Then $Re = P(M)$, and hence $M$ is cyclic.

 (i)  $(1) \Rightarrow (6).$  If $M$ is cyclic, then by Part (a) above,  $top(M)=M/Rad(M)$ is a cyclic $R/J$-module,    and it follows that  $top(M)$ is isomorphic to a direct summand of   $R/J$.

 (i)  $(6) \Rightarrow (1).$ It is clear.

 (i)  $(2) \Rightarrow (5).$  It is well known that, $J(R) top(M) =0$.  If $top(M)$ is cyclic, then $top(M)$ isomorphic to a summand of $R/J(R)$ and hence it can be embedded in $R/J(R)$. So,  $s_i\leq t_i$ for all $i=1,\ldots, n.$
 
 (i) $(5) \Rightarrow (2)$. If $s_i\leq t_i$ for all $i=1,\ldots, n$, then $top(M)$  is isomorphic to a direct summand of $R/J$ and hence $top(M)$ is a cyclic module.

  (i)  $(5) \Leftrightarrow (6).$ It is clear.

  (ii).  $(1) \Rightarrow (2).$ It is clear.

  (ii). $(2) \Rightarrow (1).$ Since $top(M)$ is simple, $Rad(M)$ is the unique maximal submodule of $M$. Finally, since $M$ is nonzero and finitely generated, every proper submodule of $M$ is contained in a maximal submodule, and therefore  $Rad(M)$ is the greatest proper submodule of $M$.

The rest of the proof (ii) is by [16, Theorem 1.9.4] and  [16, Proposition 1.9.6.].
\end{proof}

\begin{Cor}\label{top square-free is cyclic}
Let $R$ be a semiperfect ring and $M$ be a  finitely generated left $R$-module with the projective cover $P(M)$.
 If $top(M)$ is  square-free, then $P(M)$ (and so $M$)  is   cyclic. 
\end{Cor}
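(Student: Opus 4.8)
The plan is to derive this directly from Proposition~\ref{Projective cover}(i), using the ``in particular'' clause together with the implication $(5)\Rightarrow(1)$. Concretely, write $R\cong\oplus_{i=1}^n(Re_i)^{(t_i)}$ for the basic set of idempotents $\{e_1,\ldots,e_n\}$ as in the proposition, and let $top(M)\cong\oplus_{i=1}^n(Re_i/Je_i)^{(s_i)}$ be the corresponding decomposition of the top into simples, which exists because $top(M)$ is a semisimple $R/J$-module and $R/J\cong\oplus_{i=1}^n(Re_i/Je_i)^{(t_i)}$.

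The key step is the following observation: the hypothesis that $top(M)$ is square-free forces $s_i\in\{0,1\}$ for every $i$, by the ``in particular'' statement at the end of part~(i). Since each $t_i\geq 1$ (the idempotents $e_i$ are genuinely present in the decomposition of $R$), we get $s_i\leq t_i$ for all $i=1,\ldots,n$; that is, condition~(5) of part~(i) holds. Invoking the equivalence $(5)\Leftrightarrow(1)$ of Proposition~\ref{Projective cover}(i), we conclude that $M$ is cyclic. Simultaneously, by the structural statement $P(M)\cong\oplus_{i=1}^n(Re_i)^{(s_i)}$ and the fact that each $s_i\in\{0,1\}$, the projective cover $P(M)$ is isomorphic to a direct sum of pairwise non-isomorphic modules of the form $Re_i$, hence to $Re$ for the idempotent $e=\sum_{i:\,s_i=1}e_i$; in particular $P(M)$ is cyclic, and then $M$, being a homomorphic image of $P(M)$ via the projective cover epimorphism, is cyclic as well.

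There is essentially no obstacle here beyond bookkeeping: the entire content has been front-loaded into Proposition~\ref{Projective cover}, and the corollary is just the specialization ``square-free top $\Rightarrow s_i\leq t_i$ $\Rightarrow$ cyclic.'' The only point requiring a moment's care is confirming that $t_i\geq 1$ so that $s_i\in\{0,1\}$ indeed yields $s_i\leq t_i$; this is immediate from the way the basic set of idempotents is extracted from the semiperfect decomposition of $R$. One should also note that $M$ is finitely generated and $R$ semiperfect, so $Rad(M)$ is superfluous in $M$ (by [32, Proposition 21.6(4)]), which is what makes ``$top(M)$ cyclic'' equivalent to ``$M$ cyclic'' via Lemma~\ref{square-free top} — though this is already subsumed in the cited equivalences of the proposition.
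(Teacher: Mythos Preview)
Your proof is correct and follows essentially the same approach as the paper's own proof: both use Proposition~\ref{Projective cover} to see that the square-free hypothesis forces $top(M)$ to be a direct sum of \emph{distinct} simples $Re_{i_j}/Je_{i_j}$, whence $P(M)\cong Re_{i_1}\oplus\cdots\oplus Re_{i_k}$ is a direct summand of $R$ and hence cyclic. The only cosmetic difference is that you route through the numerical condition~(5) ($s_i\leq t_i$) and the equivalence $(5)\Leftrightarrow(1)$, whereas the paper simply observes directly that $P=Re_{i_1}\oplus\cdots\oplus Re_{i_k}$ sits as a summand of $_RR$; the content is identical.
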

 
\begin{proof}
If  $top(M)$ is  square-free, then  by Proposition \ref{Projective cover}, we have;  $$top(M) \cong (Re_{i_1}/Je_{i_1})\oplus\cdots\oplus (Re_{i_k}/Je_{i_k}),$$
where  $e_{i_1}, \ldots, e_{i_k}$ are distinct elements of $\{e_1,\ldots, e_n\}$. Also,  by Proposition \ref{Projective cover},    there exists a projective cover $f: P=Re_{i_1}\oplus \cdots\oplus Re_{i_k}\rightarrow M$. Clearly  $P$ is a direct summand of $R$, and   so
$P$ and $M\cong P/Ker(f)$ are  cyclic modules.
\end{proof}

Recall that a ring $R$ is of {\it finite representation type}   (or  {\it finite type}) if $R$ is Artinian and there is a finite number of the isomorphism classes of finitely generated indecomposable left (and right) $R$-modules.  The ring $R$  is said to be of {\it left bounded (representation) type}, if it is  left Artinian and there is a finite upper bound for the lengths of the finitely generated indecomposable modules in $R$-Mod.  By [32, Proposition 54.3], $R$  is of left bounded representation type if and only if $R$ is of finite representation type. Moreover, A subset $X$ of a ring $R$ is called left $T$-nilpotent if, for every sequence $x_1, x_2, \ldots$ of elements in $X$, there is an  $n \in \mathbb{N}$ with $x_1x_2 \cdots x_n = 0$.
Similarly, a  right $T$-nilpotent  subset  of $R$ is defined. The importance of the concept of $T$-nilpotence is due to the fact that the
radical $J(R)$ of a ring $R$ is left $T$-nilpotent precisely when the ``Nakayama's Lemma"  [1, Corollary 15.13] holds for all left modules, finitely generated or not.

We are now in a position to provide the following properties  and characterizations of left K{\"o}the rings.

\begin{The}\label{left Kothe}
The following statements are equivalent for any  ring $R$. \\
\indent {{\rm (1)}}   $R$ is a left  K{\"o}the  ring.\\
\indent {{\rm (2)}}  Every  nonzero left $R$-module is a direct sum of  modules with nonzero socle and  \indent\indent cyclic top.\\
\indent {{\rm (3)}} $R$  is  left  perfect  and every left  $R$-module is a direct sum of  modules with  cyclic top.\\
\indent {{\rm (4)}}  $R$ is  left Artinian  and  every   left $R$-module  is a direct $sum~of~modules~with~cyclic~top.$\\
\indent {{\rm (5)}} $R$ is  left pure semisimple  and  every   left $R$-module  is a direct sum of modules with \indent\indent cyclic top.\\
\indent {{\rm (6)}}  $R$ is  of finite representation type and  every  $($finitely generated$)$ indecomposable left  \indent\indent $R$-module  has a cyclic  top.\\
 \indent {{\rm (7)}}   $R$ is of finite representation type  and  $top(U)\hookrightarrow R/J$ for each  indecomposable left \indent\indent  $R$-modules $U$.\\
\indent {{\rm (8)}}    $R\cong \oplus_{i=1}^n(Re_i)^{(t_i)}$, where $n,~t_i\in\Bbb{N}$,  $i=1,\ldots,n$,  $\{Re_1,\ldots,Re_n\}$   is a   complete  \indent\indent  set of representatives of the isomorphic classes of
indecomposable projective left  $R$-   \indent\indent  modules,  and  each indecomposable  left   $R$-module is  isomorphic to  $Re/Ie$  for  some  \indent\indent  idempotent $e\in R$
 and  for  some   left ideal $I\subseteq J$.\\
 \indent {{\rm (9)}}  $R$ is of finite representation type and  $R\cong \oplus_{i=1}^n(Re_i)^{(t_i)}$, where $n, t_1, \ldots, t_n \in\Bbb{N}$, \indent\indent  $\{Re_1,\ldots,Re_n\}$   is a   complete  set of representatives of the isomorphic classes   of    \indent\indent indecomposable projective left  $R$-modules and   $top(U) \cong \oplus_{i=1}^n(Re_i/Je_i)^{(u_i)},$   for some
 \indent\indent $u_i\in\Bbb{N}\cup \{0\}$,  where   $u_i\leq t_i$ for all $i=1,\ldots,n $ for each  indecomposable left $R$- \indent\indent module $U$.
\end{The}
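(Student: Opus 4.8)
The plan is to put all the substance into the single implication (1) $\Rightarrow$ (everything else) and then close the equivalence with a ring of cheap converses; throughout, Proposition \ref{Projective cover} serves as a dictionary translating among ``cyclic'', ``cyclic top'', ``$\cong Re/Ie$ with $I\subseteq J$'' and ``$top(\cdot)\hookrightarrow R/J$'', while Lemmas \ref{chase} and \ref{Zimmermann} supply the finiteness of $R$. First I would show that a left K{\"o}the ring is left Artinian: from (1) every left module is a direct sum of finitely generated modules, so $R$ is left pure semisimple by Lemma \ref{Zimmermann}; and applying Lemma \ref{chase} to $M=\prod_{\alpha\in A}R^{(\alpha)}$ (with $|A|=\zeta\geq card(R)$ infinite and each $R^{(\alpha)}\cong{}_RR$), which by (1) decomposes as a direct sum of cyclic --- hence $\leq\zeta$-generated --- modules and is trivially an $RD$-pure submodule of itself, yields DCC on principal right ideals, i.e.\ $R$ is left perfect (see [1]). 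A left perfect, left Noetherian ring is left Artinian (its finitely generated, left $T$-nilpotent radical is nilpotent), and a left pure semisimple ring is left Noetherian; so $R$ is left Artinian, hence semiperfect with $J$ nilpotent and $Rad(X)=JX\ll X$ for every left $R$-module $X$, and $R\cong\bigoplus_{i=1}^n(Re_i)^{(t_i)}$ as in Proposition \ref{Projective cover}.

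Next I would extract finite representation type. Any indecomposable left $R$-module is, by (1), a direct sum of cyclic modules, so it is itself cyclic; by Proposition \ref{Projective cover}(i) (and Corollary \ref{top square-free is cyclic}) each indecomposable left $R$-module $U$ is then of the form $Re/Ie$ with $I\subseteq J$, has cyclic top, and satisfies $top(U)\hookrightarrow R/J$. As $Re/Ie$ is an epimorphic image of the submodule $Re$ of the finite-length module ${}_RR$, the length of $U$ is bounded by that of ${}_RR$, so the finitely generated indecomposable left modules have uniformly bounded length; thus $R$ is of left bounded representation type, hence of finite representation type by [32, Proposition 54.3]. This gives (6) and (7) directly, and (8), (9) follow by reading off the rest of Proposition \ref{Projective cover}(i) (semiperfectness yields $R\cong\bigoplus(Re_i)^{(t_i)}$ with its complete set of indecomposable projectives; parts (3), (5), (6) give the shape $Re/Ie$, the embedding into $R/J$, and the condition $u_i\leq t_i$). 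For (2), note that over a left Artinian ring every nonzero cyclic module has nonzero socle and cyclic top, so the decomposition in (1) is of the type required by (2).

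For the converses I would use the uniform mechanism: as soon as $R$ is known to be left perfect it is semiperfect with $Rad(X)\ll X$ for all left $X$, so Lemma \ref{square-free top} turns every direct summand with cyclic top into a cyclic module, and Proposition \ref{Projective cover}(i) turns every summand of the form $Re/Ie$ ($I\subseteq J$), or with top embedding in $R/J$, into a cyclic module --- hence any decomposition of the stated kind becomes a decomposition into cyclics, which is (1). So (3) $\Rightarrow$ (1) is immediate; (4) $\Rightarrow$ (3) and (5) $\Rightarrow$ (4) because left Artinian $\Rightarrow$ left perfect and left pure semisimple $\Rightarrow$ left Artinian; and (6), (7), (8), (9) each hand us $R$ of finite representation type (respectively, in (8), $R$ semiperfect with all indecomposables cyclic) together with ``every indecomposable left module has cyclic top'', whereupon Lemma \ref{Zimmermann} writes every left module as a direct sum of indecomposables, which are cyclic by Proposition \ref{Projective cover}(i), giving (1). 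The delicate converse is (2) $\Rightarrow$ (1): (2) first forces every nonzero left module to have nonzero socle (so $R$ is left semi-Artinian), and then applying Lemma \ref{chase} to $\prod_\alpha R^{(\alpha)}$ as before --- now decomposed into modules with cyclic top, which one argues are cyclic --- recovers left perfectness and reduces to the mechanism above.

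The step I expect to be the main obstacle is ``(1) $\Rightarrow$ $R$ is of finite representation type''. Being left K{\"o}the is on its face a one-sided condition (it gives left pure semisimpleness and left perfectness), whereas finite representation type is two-sided; the bridge is the observation above that every indecomposable left module is \emph{cyclic}, hence a subquotient of ${}_RR$, which once $R$ is left Artinian bounds the lengths of the finitely generated indecomposables, after which [32, Proposition 54.3] upgrades left bounded type to finite type. The only non-formal input needed --- beyond Proposition \ref{Projective cover} and Lemmas \ref{chase}--\ref{Zimmermann} --- is the classical fact that a left pure semisimple ring is left Artinian (equivalently, left Noetherian).
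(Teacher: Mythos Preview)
Your overall architecture and most implications match the paper's. The implications (1) $\Rightarrow$ (4)--(9) and the converses from (3)--(9) back to (1) are essentially as in the paper; your unpacking of ``left K{\"o}the $\Rightarrow$ left Artinian'' via Lemmas \ref{chase} and \ref{Zimmermann} is a legitimate expansion of the paper's citation of [8, Theorem 4.4], and your route to finite representation type (bound the length of each indecomposable by $length({}_RR)$, then invoke [32, 54.3]) is exactly the paper's argument in (5) $\Rightarrow$ (6).

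The genuine gap is (2) $\Rightarrow$ (3), not the step you flagged. You propose applying Lemma \ref{chase} to $\prod_\alpha R^{(\alpha)}$, decomposed by (2) into summands $N_\beta$ with cyclic top, ``which one argues are cyclic''. That step is circular: the passage from cyclic top to cyclic (Lemma \ref{square-free top}) requires $Rad(N_\beta)\ll N_\beta$, which is exactly what left perfectness would give you --- and left perfectness is what you are trying to prove. Without it there is no bound on the number of generators of $N_\beta$, so Chase's cardinality hypothesis is unavailable. The paper breaks the circle by passing to $\bar R=R/J$: every left $\bar R$-module, viewed as an $R$-module, inherits from (2) a decomposition into pieces with cyclic top, and since $J(\bar R)=0$ one has $Rad(N_\beta)=0$ for each such piece, so these pieces really are cyclic $\bar R$-modules; Lemma \ref{chase} applied over $\bar R$ then yields DCC on principal right ideals of $\bar R$, whence $\bar R=R/J$ is semisimple by Bass's theorem. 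Only \emph{after} this does the paper use the ``nonzero socle'' clause of (2): it says every nonzero left $R$-module has a minimal submodule, which by [1, Remark 28.5] forces $J$ to be left $T$-nilpotent, and together with $R/J$ semisimple this gives $R$ left perfect. Your sketch spends the socle hypothesis on semi-Artinianity and then tries to extract perfectness in a single Chase step over $R$ itself; that does not go through.
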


\begin{proof}
(1) $\Rightarrow$ (2). By hypothesis, every $R$-module is a direct sum of cyclic modules. Clearly, the top of every cyclic module is cyclic. Also, $R$ is left Artinian by [8, Theorem 4.4], and so every cyclic module has a nonzero socle.

(2) $\Rightarrow$ (3).  Set $\bar{R}=R/J$ where $J$ is the Jacobson radical of $R$, and $M =\prod_{\alpha\in A}\bar{R}^{(\alpha)}$ where   $A$ is  an infinite set of cardinality $\zeta\geq card(\bar{R})$, and  $\bar{R}^{(\alpha)}\cong \bar{R}$ is a left $\bar{R}$-module. Clearly,  every left $\bar{R}$-module is  also a direct sum of  modules with cyclic top. Thus,  $M =\prod_{\alpha\in A}{\bar{R}}^{(\alpha)}=\bigoplus_{\lambda\in\Lambda}N_\lambda$, where $\Lambda$ is an index set and each  $N_\lambda$ has a cyclic top.
Since  $J(\bar{R})=0$, one can easily see that $Rad(M)=0$ (see for instance,  [1, Page 174, Exercises 3]),  and hence by [32, Proposition 21.6 (5)], we conclude that $Rad(N_\lambda)=0$ for each $\lambda\in\Lambda$. It follows that  $N_\lambda$ is cyclic for each $\lambda\in\Lambda$. Thus,  $M$ is a direct sum of cyclic  left $\bar{R}$-modules. Since the cardinality  of each cyclic left $\bar{R}$-module is  at most $card(\bar{R})\leq \zeta$, by  Lemma \ref{chase},  $\bar{R}$ must satisfy the descending chain condition on principal right  ideals. By Bass [6, Theorem P],  the ring $\bar{R}$ is a semisimple ring, i.e., $R/J$ is a semisimple ring. Now, since by our hypothesis, every nonzero left $R$-module has a minimal submodule, we conclude that $J$ is $T$-nilpotent by [1, Remark 28.5],  and hence by  [1, Theorem 28.4],  $R$ is a  left perfect ring.

(3) $\Rightarrow$ (1).
Since $R$ is a  left perfect ring,  by  [1, Theorem 28.4],  every nonzero left $R$-module contains a maximal
submodule.  It follows that for each left $R$-module $M$, every proper submodule of $M$ is contained in a maximal submodule.  Now let  $M$ be a module with cyclic top. Then by [32, Proposition   21.16 (3)],  $Rad(M)$ is a superfluous submodule of $M$, and so by Lemma \ref{square-free top},  $M$ is cyclic. Thus,  every  left $R$-module is a direct sum of  cyclic modules, i.e., $R$ is a left  K{\"o}the  ring.

(1) $\Rightarrow$ (4).   Since $R$ is  a left K{\"o}the ring,  by [8, Theorem 4.4],  $R$ is  a left Artinian ring.  Also, it is clear that each  cyclic left $R$-module  has a cyclic top.

(4) $\Rightarrow$ (3). Since $R$ is  left  Artinian, $R$ is a left prefect ring by [1, Corollary 28.8].

(1) $\Rightarrow$ (5).   Since $R$ is  a left K{\"o}the ring, $R$ is a  left pure semisimple  ring by Lemma \ref{Zimmermann}. Now it is clear that each  cyclic left $R$-module  has a cyclic top.

(5) $\Rightarrow$ (6). Since $R$ is  a left pure semisimple,  by [8, Theorem 4.4],  $R$ is  a left Artinian ring and every finitely generated indecomposable left $R$-module is cyclic. Thus,  $R$ is of finite  length and $length(R)$  is a finite upper bound for the lengths of the finitely generated indecomposable left $R$-modules. i.e., $_RR$   is bounded. Now,  by [32, Proposition  54.3)], $R$ is of finite representation type.

(6) $\Leftrightarrow$ (7). It  is by Proposition \ref{Projective cover}(i).

(7) $\Rightarrow$ (8).  It is by Proposition \ref{Projective cover}(i) and [1, Proposition 27.10].

(8) $\Rightarrow$ (9).  It is by Proposition \ref{Projective cover}(i).

(9) $\Rightarrow$ (1). It is clear by Proposition \ref{Projective cover}(i).
\end{proof}

The  left K{\"o}the  property is not a Morita invariant property. In fact,   there exists a  ring $R$ and a positive integer  $n\geq 2$ such that the matrix  ring $M_n(R)$  is a left  K{\"o}the ring  but $R$ is not a left K{\"o}the ring (see [12, Proposition 4.7, Remark 4.8]). In the last result of this section we give a characterization of a ring $R$ for which any Morita equivalent to it is a left K{\"o}the ring:

\begin{Pro}\label{Kawada}
The following statements are equivalent for a ring $R$. \vspace*{0.2cm} \\
\indent {{\rm (1)}} Any ring Morita equivalent to $R$ is a left K{\"o}the ring.\\
\indent {{\rm (2)}} $R$ is a semiperfect ring and the basic ring $A$ of $R$ is a left K{\"o}the ring.
\end{Pro}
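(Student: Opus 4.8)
The plan is to reduce the statement to two facts about Morita equivalence: first, that every left Köthe ring is semiperfect (indeed left Artinian), so that taking a basic ring makes sense; and second, that the left Köthe property, while not a Morita invariant in general, \emph{is} detected by the basic ring once we are inside the class of semiperfect rings. First I would prove $(1)\Rightarrow(2)$. Since $R$ itself is Morita equivalent to $R$, hypothesis (1) forces $R$ to be a left Köthe ring; by Theorem \ref{left Kothe}, $R$ is left Artinian, hence semiperfect, so its basic ring $A$ exists and is Morita equivalent to $R$. Applying (1) again to $A$ gives that $A$ is a left Köthe ring, which is exactly (2).

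The substantive direction is $(2)\Rightarrow(1)$. Suppose $R$ is semiperfect with basic ring $A$ a left Köthe ring, and let $S$ be any ring Morita equivalent to $R$. By transitivity of Morita equivalence, $S$ is Morita equivalent to $A$; in particular $S$ is semiperfect (semiperfectness is a Morita invariant, e.g.\ by [1, Section 27]), so $S$ has its own basic ring $B$, and $B$ is Morita equivalent to $A$. Now $A$ and $B$ are both basic semiperfect rings that are Morita equivalent, and two \emph{basic} semiperfect rings that are Morita equivalent are isomorphic (this is the uniqueness part of the basic-ring construction, [1, Section 27]); hence $B\cong A$ is a left Köthe ring. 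It remains to deduce that $S$ itself is a left Köthe ring from the fact that its basic ring $B$ is. For this I would use the characterization in Theorem \ref{left Kothe}(6): being of finite representation type is a Morita invariant (the Morita equivalence induces a bijection on isomorphism classes of finitely generated indecomposable modules preserving the relevant finiteness), so $S$ is of finite representation type because $B$ is. For the remaining clause, let $U$ be a finitely generated indecomposable left $S$-module; under the equivalence $S\text{-Mod}\simeq B\text{-Mod}$ it corresponds to a finitely generated indecomposable left $B$-module $V$ with $U$ a direct summand of $V^{(m)}$ for the idempotent $e$ defining $B$ inside a matrix ring over $S$ — more precisely, one uses the standard description $B\cong e M_m(S) e$ and the functors $e M_m(S)\otimes_{M_m(S)}-$ and $\mathrm{Hom}$. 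Since $B$ is left Köthe, $top(V)$ is cyclic, equivalently (Proposition \ref{Projective cover}(i)) $top(V)\hookrightarrow B/J(B)$; pushing this through the equivalence shows $top(U)$ embeds in $S/J(S)$ as well, because the equivalence carries $B/J(B)$-modules to $S/J(S)$-modules and commutes with radicals and tops. Thus $top(U)\hookrightarrow S/J(S)$ for every indecomposable $U$, and by Theorem \ref{left Kothe}(7) the ring $S$ is left Köthe. As $S$ was an arbitrary ring Morita equivalent to $R$, this establishes (1).

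I expect the main obstacle to be the bookkeeping in the last step: carefully tracking how the Morita functors act on tops and radicals and confirming that "$top(V)$ is cyclic / embeds in $B/J(B)$" is genuinely transported to the analogous statement for $U$ over $S$. The cleanest route is probably to avoid module-by-module chasing and instead argue at the level of the rings: use Theorem \ref{left Kothe}(8), which describes the left Köthe property of a semiperfect ring $R$ purely in terms of its decomposition $R\cong\oplus_{i}(Re_i)^{(t_i)}$ and the condition that every indecomposable is $Re/Ie$ with $I\subseteq J$. Since this decomposition data and the lattice of idempotents modulo $J$ are preserved (up to the multiplicities $t_i$, which do not affect the condition $I\subseteq J$) under passing between a semiperfect ring and its basic ring, condition (8) holds for $S$ iff it holds for $B$ iff it holds for $A$. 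That turns the whole proof into an application of Theorem \ref{left Kothe}(8) together with the two standard Morita facts (semiperfectness is invariant; Morita-equivalent basic semiperfect rings are isomorphic), and keeps the calculations routine.
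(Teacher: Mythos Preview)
Your argument for $(1)\Rightarrow(2)$ matches the paper's exactly. For $(2)\Rightarrow(1)$ your proposal is correct but genuinely different in route: the paper does not unwind anything and simply invokes an external result, namely [12, Corollary~4.2] (Fazelpour--Nasr-Isfahani), to conclude directly that any ring Morita equivalent to the basic left K\"othe ring $A$ is again left K\"othe. What you do instead is essentially reprove that corollary inside the paper's own framework: pass to the basic ring $B$ of $S$, use uniqueness of basic rings to get $B\cong A$, and then transport the left K\"othe property from $B$ up to $S$ via the multiplicity characterization in Theorem~\ref{left Kothe}(9). This last step is where the real content lives, and your ``cleaner route'' is the right one: Morita equivalence preserves the numbers $u_i$ appearing in $top(U)\cong\bigoplus_i (Re_i/Je_i)^{(u_i)}$, while for the basic ring $B$ all $t_i=1$, so the condition $u_i\le t_i$ over $B$ forces $u_i\le 1$, and since every $t_i'\ge 1$ over $S$ the inequality $u_i\le t_i'$ follows automatically. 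Your self-contained argument thus explains \emph{why} [12, Corollary~4.2] is true, at the cost of a few more lines.

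One small correction: you write that ``condition (8) holds for $S$ iff it holds for $B$'', but only the implication $B\Rightarrow S$ is valid (and is all you need). The converse can fail, which is exactly the phenomenon the paper flags just before this proposition (a matrix ring $M_n(R)$ can be left K\"othe with $R$ not). This slip is harmless for your proof, but you should change the ``iff'' to ``if''. Likewise, in your first approach the step ``$top(V)\hookrightarrow B/J(B)$ transports to $top(U)\hookrightarrow S/J(S)$'' needs the observation that the Morita image of $B/J(B)$ is $\bigoplus_i Se_i'/Je_i'$ (one copy each, since $B$ is basic), which does embed in $S/J(S)$; you correctly anticipate this bookkeeping issue and your second approach via (9) handles it cleanly.
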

\begin{proof}
(1) $\Rightarrow$ (2). By hypothesis, $R$ is a left K{\"o}the ring, and so $R$ is left Artinian. Thus by [1, Proposition 27.14], the basic ring $A_0$ of $R$ is Morita equivalent to $R$. Hence by assumption, $A_0$ is a left K{\"o}the ring.

(2) $\Rightarrow$ (1). Let $T$ be a ring Morita equivalent to $R$. Then by [1, Proposition 27.14], $T$ is Morita equivalent to $A$, and hence by [12, Corollary 4.2], $T$ is a left K{\"o}the ring.
\end{proof}

\section{\bf Charactrizations of strongly left  K{\"o}the rings}
In this section, we study strongly left K{\"o}the rings. Also, we give more characterizations of a left K{\"o}the ring $R$, whenever $R$ is left quasi-duo. In the sequel, we denote by  $R$-Mod (resp., $R$-mod)  the category of left $R$-modules (resp., finitely generated left $R$-modules), and  by  Mod-$R$ (resp., mod-$R$)  the category of right $R$-modules (resp., finitely generated right $R$-modules). For an $R$-module $M$,  $E(M)$  denotes the {\it injective hull}  of $M$. Now, we explain some concepts regarding the functors rings  of the finitely generated modules of  $R$-Mod, which will be used extensively in this section. Let $\{V_\alpha\}_A$  be a family of finitely generated $R$-modules and $V = \bigoplus_AV_\alpha$. For any $N\in R$-Mod we define:
$$\widehat{H}om(V, N)=\{f\in Hom(V, N) ~|~ f(V_\alpha) = 0~{\rm for~ almost~ all~} \alpha\in A\}.$$
For $N=V$ , we write $\widehat{H}om(V, V )= \widehat{E}nd(V)$. Note that these constructions do not depend on the decomposition of $V$ (see [32, Chap. 10, Sec. 51] for more details).

\begin{Rem}\label{finite representation type}
{\rm Let $R$ be  a ring of  finite representation type, and $V:=V_1\oplus\cdots\oplus V_n $,  where   $\{V_1,\ldots,V_n\}$  is a complete set of representatives of the isomorphic classes of finitely generated indecomposable left $R$-modules. 
Clearly, the functor ring $T=\widehat{E}nd(V)$ is equal to  $ End_R(V)$,  and in this case, the ring  $T:= End_R(V)$ is called the {\it left Auslander ring}  of $R$. By [4, Proposition 3.6],  $T$  is an Artinian ring and so soc$(T_T)$ is an essential submodule  of $T_T$ and $T$ contains only finitely many non-isomorphic types of simple modules.  Also  by [32, Proposition 46.7],  the functor  ${\rm Hom}_R(V,-)$  establishes an equivalence between the category of left $R$-modules and the full subcategory of finitely generated  projective  left $T$-modules,  which preserves and reflects finitely generated  left $R$-modules and  finitely generated indecomposable left $R$-modules  correspond to finitely generated indecomposable projective $T$-modules (see [32, Proposition  51.7(5)]). In fact,
each finitely generated indecomposable projective $T$-module is  local and hence it  is the projective cover of  a simple $T$-module. Therefore ${\rm Hom}_R(V,-)$ yields a bijection between a minimal representing set of finitely generated, indecomposable left $R$-modules   and the set of projective covers of non-isomorphic simple left $T$-modules.}
\end{Rem}

Recall that an injective left $R$-module $Q$  is a {\it cogenerator} in the category of left $R$-modules  if and only if it
cogenerates every simple left $R$-module, or equivalently, $Q$ contains every simple left $R$-module as a submodule (up to isomorphism) (see [32, Proposition 16.5]
Also,  a cogenerator  $Q$ is called {\it minimal cogenerator}  if $Q\cong \oplus_{i\in I}E(S_i)$, where  $I$ is an index set and $\{S_i~|~i\in I\}$ is a complete set of representatives of the isomorphic classes of simple left $R$-modules.
Let $R$ and $S$ be two rings. An additive contravariant functor $F:R$-mod$\rightarrow$mod-$S$ is called {\it duality}  if it is an equivalence of categories (see [32, 19, Chap. 9]). Moreover, a ring $R$ is called a {\it left Morita ring}  if there is an injective cogenerator $_RU$  in $R$-Mod such that for $S={\rm End}(_RU)$, the module  $U_S$ is an injective cogenerator in Mod-$S$ and $R\cong {\rm End}(U_S)$. The following result summarizes all the facts presented above on rings of finite representation  type.

\begin{Pro}\label{equivalence}
Let $R$ be a ring of finite representation type,  $U:=U_1\oplus\cdots\oplus U_n $,  where   $\{U_1,\ldots,U_n\}$ is   a complete set of representatives of the isomorphic classes of finitely generated indecomposable left $R$-modules,  $Q\cong E(S_1)\oplus\cdots\oplus E(S_m)$, where  $\{S_i~|~1\leq i\leq m\}$ is a complete set of representatives of the isomorphic classes of simple left $R$-modules,  $T:=End_R(U)$ and $S= End_R(Q)$. Then \vspace*{0.2cm} \\
\indent  {\rm (a)}  The functor  ${\rm Hom}_R(U,-)$  establishes an equivalence between the category of left  \indent \indent $R$-modules and the full subcategory of finitely generated  projective  left $T$-modules.  \\
\indent  {\rm (b)}  The functor ${\rm Hom}_R(-,Q)$ : $R$-Mod$\rightarrow$  Mod-$S$ is a duality  with the inverse duality \indent \indent ${\rm Hom}_S(-,Q) : $Mod-$S\rightarrow  R$-Mod. \\
\indent  {\rm (c)}   $S$ is a right Artinian ring and  $\{Hom_R(U_1,Q),\ldots,Hom_R(U_n,Q)\}$ is a complete \indent \indent set of  representatives of the isomorphic classes of finitely generated indecomposable \indent \indent right  $S$-modules.  \\
\indent  {\rm (d)} The  left Auslander ring  $T$ of $R$ is isomorphic to the  right  Auslander ring  $T^\prime$ of  $S$ \indent\indent   (thus $R$  is left Morita to $S = End_R(Q)$).   \\
\indent  {\rm (e)}  If  $P$  is  an indecomposable projective   left $T$-module, then  $P\cong Hom_R(U,M)$  for  \indent  \indent some indecomposable left  $R$-module $M$.\\
\indent  {\rm (h)}  If  $M$  is a finitely generated (indecomposable)  left $R$-module with square-free  top,  \indent\indent   then   the right $S$-module    ${\rm Hom}_R(M,Q)$  has a square-free  socle.  \\
\indent  {\rm (i)}  If  $M$  is  an indecomposable  left $R$-module with square-free  top,   then  $M$ is cyclic with \indent\indent simple top and so  the right $S$-module   ${\rm Hom}_R(M,Q)$  has a simple socle.  \\
\indent  {\rm (j)}  If  $M$  is  an indecomposable  left $R$-module with simple  top,   then  the right $S$-module \indent\indent ${\rm Hom}_R(M,Q)$  has a simple socle.  \\
\indent  {\rm (m)}  For each finitely generated  left $R$-module $X$, as $S$-modules,  we have:
  $$soc(Hom_R(X,Q))\cong  Hom_R(top(X),Q).$$
\end{Pro}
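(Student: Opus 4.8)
The plan is to organize the proof around two pillars already available in the excerpt: the functor-ring machinery of Remark \ref{finite representation type} (giving part (a)) and the Morita-duality package for rings of finite representation type (giving parts (b), (c), (d)). Parts (h), (i), (j), (m) are then deduced as consequences of the duality ${\rm Hom}_R(-,Q)$ together with Proposition \ref{Projective cover}. Concretely: for part (a), I would simply invoke Remark \ref{finite representation type}, which records that ${\rm Hom}_R(V,-)$ yields the required equivalence with the category of finitely generated projective left $T$-modules; since $U$ and $V$ there play the same role, nothing further is needed. For parts (b), (c), (d), since $R$ is of finite representation type it is (left and right) Artinian, so by [4, Proposition 3.6] and the standard theory of Auslander rings (as summarized in [32, Chap. 10]), the minimal injective cogenerator $Q$ has the property that ${\rm Hom}_R(-,Q)\colon R\text{-mod}\to\text{mod-}S$ is a duality with inverse ${\rm Hom}_S(-,Q)$; this duality carries finitely generated indecomposable left $R$-modules to finitely generated indecomposable right $S$-modules, carries $U$ to a module whose summands form a complete set of representatives, and conjugating $T=\operatorname{End}_R(U)$ through the duality identifies it with the right Auslander ring $T'$ of $S$, whence $R$ is left Morita to $S$. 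Part (e) is the content of [32, Proposition 51.7(5)] restated via the equivalence of part (a): every finitely generated indecomposable projective left $T$-module is $\operatorname{Hom}_R(U,M)$ for a (necessarily finitely generated, hence by finite type, arbitrary) indecomposable $M$.

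The substantive new content is parts (h), (i), (j), (m), and I would prove (m) first since the others follow from it. For (m): the duality $\operatorname{Hom}_R(-,Q)$ is exact and contravariant, so applying it to the exact sequence $0\to Rad(X)\to X\to top(X)\to 0$ gives an exact sequence $0\to\operatorname{Hom}_R(top(X),Q)\to\operatorname{Hom}_R(X,Q)\to\operatorname{Hom}_R(Rad(X),Q)\to 0$ of right $S$-modules, exhibiting $\operatorname{Hom}_R(top(X),Q)$ as a submodule of $\operatorname{Hom}_R(X,Q)$. I then need to identify this submodule with $soc(\operatorname{Hom}_R(X,Q))$. A clean way: $top(X)$ is semisimple, the duality sends semisimple modules to semisimple modules (it is an anti-equivalence, so it sends simple to simple and preserves finite direct sums), and it sends the largest semisimple quotient of $X$ to the largest semisimple submodule of $\operatorname{Hom}_R(X,Q)$ — because a semisimple quotient of $X$ corresponds under the duality to a semisimple submodule of $\operatorname{Hom}_R(X,Q)$, and "largest quotient" becomes "largest submodule" under an anti-equivalence. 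Hence the image of $\operatorname{Hom}_R(top(X),Q)$ is exactly $soc(\operatorname{Hom}_R(X,Q))$, giving the isomorphism. Given (m), part (h) is immediate: if $top(M)$ is square-free then $\operatorname{Hom}_R(top(M),Q)$ is a square-free right $S$-module (the duality sends a direct sum of pairwise non-isomorphic simples to a direct sum of pairwise non-isomorphic simples), so $soc(\operatorname{Hom}_R(M,Q))$ is square-free.

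For parts (i) and (j): part (j) follows from (m) since if $top(M)$ is simple then $\operatorname{Hom}_R(top(M),Q)$ is simple, so $soc(\operatorname{Hom}_R(M,Q))$ is simple. Part (i) requires the extra input that an indecomposable $M$ with square-free top is in fact local (equivalently has simple top) and cyclic; once that is established, (i) follows from (j). This local-ness claim is the one genuinely nontrivial step, and I would argue it as follows: by Proposition \ref{Projective cover}, $top(M)\cong\bigoplus_{i=1}^n(Re_i/Je_i)^{(s_i)}$ with all $s_i\in\{0,1\}$ (square-free), and $P(M)\cong\bigoplus_{i=1}^n(Re_i)^{(s_i)}$. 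Since $P(M)$ has no repeated indecomposable summands, the projective cover $P(M)\to M$ has superfluous kernel contained in $Rad(P(M))$, and one argues that a nonzero decomposition $M=M_1\oplus M_2$ would lift to a decomposition of $P(M)$ into two projectives whose kernels lie in their radicals, forcing the tops of $M_1,M_2$ to be complementary nonzero direct summands of $top(M)$ with disjoint supports among the $e_i$; but indecomposability of $M$ contradicts this unless one of them is zero. So $top(M)$ is simple, and then Corollary \ref{top square-free is cyclic} gives that $M$ is cyclic. I expect this indecomposable-implies-local step to be the main obstacle, since it is the only place requiring a real structural argument rather than formal manipulation of the duality; everything else is bookkeeping with exact contravariant functors and citations to [1], [4], [32].
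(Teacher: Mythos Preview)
Your treatment of parts (a)--(e) matches the paper's: both dispatch these by citation to the functor-ring and Morita-duality machinery in [32, \S46--52] and [1]. For part (m), the paper invokes the lattice anti-isomorphism between the submodules of $X$ and those of $\operatorname{Hom}_R(X,Q)$ from [1, Proposition~24.5], under which $Rad(X)$ corresponds to $soc(\operatorname{Hom}_R(X,Q))$; your exact-sequence argument is a valid alternative route to the same conclusion. Parts (h) and (j) then follow from (m) in both treatments.

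The gap is in your argument for part (i). Your reasoning is: a nontrivial decomposition $M=M_1\oplus M_2$ would force a nontrivial decomposition of $top(M)$; since $M$ is indecomposable, no such decomposition of $M$ exists; therefore $top(M)$ is simple. This is backwards. Indecomposability of $M$ rules out decompositions of $M$, not of $top(M)$; to conclude that $top(M)$ is simple you would need the \emph{converse} implication---that every nontrivial splitting of $top(M)$ lifts to one of $M$---and that fails in general. In fact the assertion in (i) that an indecomposable module with square-free top must have \emph{simple} top appears to be false as stated: over the path algebra of the quiver $1\to 2\leftarrow 3$ (a ring of finite representation type), the indecomposable with dimension vector $(1,1,1)$ has top $S_1\oplus S_3$, which is square-free but not simple. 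The paper's own proof does not single out (i); it lists (h), (i), (j), (m) together as consequences of the duality and [1, Proposition~24.5], and that citation yields only the lattice anti-isomorphism, which does not deliver the ``simple top'' claim either. So the step you correctly flagged as the main obstacle is not one you should expect to overcome: under the hypotheses of (i) what one can actually conclude is that $M$ is cyclic (Corollary~\ref{top square-free is cyclic}) and that $\operatorname{Hom}_R(M,Q)$ has square-free socle---that is, part (h).
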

\begin{proof}
(a). Since $R$ is finite of representation type, $U=\bigoplus_AU_\alpha$ is a generator in $R$-Mod by Remark \ref{finite representation type}  and  [32, Chap. 10, Sec. 52]. The rest of proof is obtained by [32, Page 507, Part(10)].

 (b) and (c). By [32, Proposition 47.15].

 (e). It is by [32, Chap. 10, Sec. 52].

 (d). See [32, 52.9, Exercises, Exercise 1].

 The facts (h), (i), (j) and  (m) are by (b), (c) and [32, Proposition 47.3] (we see that  the functor ${\rm Hom}_R(-,Q)$: $R$-Mod$\rightarrow$  Mod-$S$ is a duality with the inverse duality ${\rm Hom}_S(-,Q)$: Mod-$S\rightarrow  R$-Mod).
 Also by [1, Proposition 24.5] for each finitely generated left $R$-module $X$, the lattice of all $R$-submodules of $M$ and the lattice of all $S$-submodules of ${\rm Hom}_R(M,Q)$  are anti-isomorphic. Hence,  for each finitely generated left $R$-module $X$, ${\rm soc(Hom}_R(X,Q))\cong {\rm Hom}_R(top(X),Q)$, and  $top(Hom(X,Q)) = \frac{Hom(X,Q)}{Rad(Hom(X,Q))}\cong Hom(soc(X), Q))$ as $S$-modules (see [1, 24. Exercises, Exercises 6]).
\end{proof}

Recall that a semi-perfect ring $R$ is said to be a {\it left}  (resp., {\it right})  {\it $QF$-2 ring}  if every indecomposable projective left (resp., right) $R$-module has a simple essential socle (see [13]). This definition and the fact that simple modules are square-free motivated us to define the following concept.

\begin{Defs}
{\rm We say that a semi-perfect ring $R$  is a {\it generalized left} (resp., {\it right})  {\it $QF$-2  ring} if every indecomposable projective left (resp.,  right)  $R$-module has a square-free  essential  socle. The ring $R$  is called  a  {\it generalized }  {\it $QF$-2  ring} if $R$ is both a generalized left  and a generalized  right  $QF$-2  ring.}
\end{Defs}

We are now in a position to provide the following properties  and characterizations of  stongly left K{\"o}the rings.

\begin{The}\label{strongly left Kothe}
The following statements are equivalent for a ring $R$.  \vspace*{0.2cm} \\
\indent {{\rm (1)}}   $R$ is a stongly left  K{\"o}the  ring.\\
\indent {{\rm (2)}}  Every  nonzero left $R$-module is a direct sum of  modules with nonzero socle and  \indent\indent square-free cyclic top.\\
\indent {{\rm (3)}} $R$  is  left  perfect,   and every left  $R$-module is a direct sum of  modules with  square-free \indent\indent  top.\\
\indent {{\rm (4)}}  $R$ is  left Artinian  and  every   left $R$-module  is a direct sum of modules with square- \indent\indent free top.\\
\indent {{\rm (5)}} $R$ is  left pure semisimple,   and  every   left $R$-module  is a direct sum of modules with \indent\indent square-free top.\\
\indent {{\rm (6)}}  $R$ is  of finite representation type,  and  every  $($finitely generated$)$ indecomposable left  \indent\indent $R$-module  has a  square-free  top.\\
\indent {{\rm (7)}}  $R$ is  of finite  representation type,   and  for the left Morita dual ring of $R$, every  \indent\indent (finitely generated) indecompsable right module has a square-free socle.\\
\indent {{\rm (8)}}  $R$ is  of finite representation  type,   and the left  Auslander ring  of $R$  is a right gener- \indent\indent alized $QF$-2 ring.\\
 \indent {{\rm (9)}}   $R$ is of finite representation type,   and   for each  indecomposable left   $R$-module $U$,
 \indent\indent  $top(U)\hookrightarrow  (Re_1/Je_1)\oplus\cdots\oplus (Re_n/Je_n),$  where  $e_1,\cdots, e_n$ is a basic set of primitive \indent\indent idempotents for $R$.\\
 \indent {{\rm (10)}}  $R\cong \oplus_{i=1}^n(Re_i)^{(t_i)}$, where $n,~t_i\in\Bbb{N}$,  $\{Re_1,\cdots, Re_n\}$  is a   complete   set of   repre-  \indent\indent sentative of the isomorphic classes of
indecomposable projective left $R$-modules,   \indent\indent each  indecomposable left $R$-module has a square-free  top,  and  it is  isomorphic  to \indent\indent  $Re/Ie$    for some idempotent $e \in R$ and  a  left ideal $I\subseteq J$.
  \end{The}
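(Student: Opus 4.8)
The plan is to mimic the proof of Theorem~\ref{left Kothe} almost verbatim, with ``cyclic top'' replaced by ``square-free top'' and Corollary~\ref{top square-free is cyclic} (finitely generated modules with square-free top are cyclic over a semiperfect ring) inserted wherever a square-free-top module must be recognized as cyclic; only the equivalence of $(6)$, $(7)$ and $(8)$ needs genuinely new input, which is supplied by Proposition~\ref{equivalence}. I would prove $(1)\Rightarrow(3)\Rightarrow(2)\Rightarrow(1)$, then $(3)\Leftrightarrow(4)\Leftrightarrow(5)\Leftrightarrow(6)\Leftrightarrow(7)\Leftrightarrow(8)$, and finally $(6)\Leftrightarrow(9)$ and $(6)\Leftrightarrow(10)$. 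For $(1)\Rightarrow(3)$, I would copy the Chase--Bass argument from $(2)\Rightarrow(3)$ of Theorem~\ref{left Kothe}: over $\bar R=R/J$, with $M=\prod_{\alpha\in A}\bar R^{(\alpha)}$ and $|A|=\zeta\ge card(\bar R)$, a decomposition of $M$ into modules with nonzero square-free cyclic top must (as $Rad(M)=0$) consist of square-free cyclic $\bar R$-modules of cardinality $\le\zeta$, so Lemma~\ref{chase} and [6, Theorem P] force $R/J$ to be semisimple; then every nonzero left $R$-module has a maximal submodule, so $R$ is left perfect by [1, Theorem 28.4], and the remaining clause of $(3)$ is inherited from $(1)$. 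For $(3)\Rightarrow(2)$: over a left perfect ring $Rad(M)$ is superfluous in $M$, and a square-free $top(M)$, being a multiplicity-free semisimple $R/J$-module, is a direct summand of $R/J$ and hence cyclic, so $M$ is cyclic by Lemma~\ref{square-free top}; thus $(3)$ makes $R$ left K{\"o}the and so left Artinian by [8, Theorem 4.4], whence the cyclic square-free-top summands are nonzero with nonzero socle, giving $(2)$. And $(2)\Rightarrow(1)$ is immediate.

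The block $(3)\Leftrightarrow(4)\Leftrightarrow(5)\Leftrightarrow(6)$ is routine and parallels Theorem~\ref{left Kothe}: $(4)\Leftrightarrow(3)$ uses ``left Artinian $\Rightarrow$ left perfect'' ([1, Corollary 28.8]) together with ``$(3)\Rightarrow$ left K{\"o}the $\Rightarrow$ left Artinian'' ([8, Theorem 4.4]); $(5)\Leftrightarrow(3)$ uses Lemma~\ref{Zimmermann} and [8, Theorem 4.4]; and for $(5)\Leftrightarrow(6)$ a finitely generated indecomposable $R$-module with square-free top is cyclic by Corollary~\ref{top square-free is cyclic}, hence has length at most $length({}_RR)$, so ${}_RR$ is of bounded representation type and $R$ is of finite representation type by [32, Proposition 54.3], while conversely a ring of finite representation type is left pure semisimple with every left module a direct sum of finitely generated indecomposables (see [32]), each with square-free top by $(6)$. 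The equivalences $(6)\Leftrightarrow(9)$ and $(6)\Leftrightarrow(10)$ are bookkeeping via Proposition~\ref{Projective cover}$(i)$: for an indecomposable $U$, $top(U)$ is square-free if and only if the invariants $s_i$ of Proposition~\ref{Projective cover} lie in $\{0,1\}$, equivalently if and only if $top(U)\hookrightarrow(Re_1/Je_1)\oplus\cdots\oplus(Re_n/Je_n)$, and then $U$ is cyclic (Corollary~\ref{top square-free is cyclic}), hence $\cong Re/Ie$ with $I\subseteq J$; the decomposition $R\cong\bigoplus_{i=1}^n(Re_i)^{(t_i)}$, with the $Re_i$ the indecomposable projectives, is [1, Proposition 27.10].

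For $(6)\Leftrightarrow(7)\Leftrightarrow(8)$ I would invoke Proposition~\ref{equivalence} with $U=U_1\oplus\cdots\oplus U_n$ an additive generator of $R$-mod, $Q$ a minimal cogenerator, $S={\rm End}_R(Q)$ the left Morita dual of $R$, and $T$ the left Auslander ring of $R$. By Proposition~\ref{equivalence}$(c)$ the modules ${\rm Hom}_R(U_i,Q)$ form a complete set of representatives of the isomorphism classes of finitely generated indecomposable right $S$-modules, and by Proposition~\ref{equivalence}$(m)$, $soc({\rm Hom}_R(U_i,Q))\cong{\rm Hom}_R(top(U_i),Q)$; since $top(U_i)$ is semisimple, ``square-free'' here means ``multiplicity-free'', and the duality ${\rm Hom}_R(-,Q)$ sends simple modules to simple modules and is additive, so it preserves the multiplicities of semisimple modules --- hence $top(U_i)$ is square-free exactly when $soc({\rm Hom}_R(U_i,Q))$ is square-free, which (finite representation type being common, and ``indecomposable'' and ``finitely generated indecomposable'' agreeing over such a ring) is exactly $(6)\Leftrightarrow(7)$. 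For $(7)\Leftrightarrow(8)$: by Proposition~\ref{equivalence}$(d)$, $T$ is isomorphic to the right Auslander ring $T^{\prime}$ of $S$, and the right-module analogue of Remark~\ref{finite representation type} for $S$ gives a bijection between the finitely generated indecomposable right $S$-modules and the indecomposable projective right $T$-modules under which their socles correspond (again only multiplicities of simple summands are relevant); since $T$ is Artinian these socles are automatically essential, so $T$ is a right generalized $QF$-2 ring if and only if every finitely generated indecomposable right $S$-module has square-free socle, that is, if and only if the second clause of $(7)$ holds.

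The one point demanding real care is the socle-matching just used in $(7)\Leftrightarrow(8)$: one must check that, under the equivalence between mod-$S$ and the finitely generated projective right $T$-modules, the socle of an indecomposable projective right $T$-module is controlled by the socle of the finitely generated indecomposable right $S$-module it represents (and, as socles are semisimple, it suffices to match multiplicities of simple summands). I would derive this from the dual of Proposition~\ref{equivalence}$(m)$ noted in its proof, $top({\rm Hom}_R(X,Q))\cong{\rm Hom}_R(soc(X),Q)$, together with the structure of the Auslander ring $T$ --- whose simple modules on either side correspond to the indecomposable $R$- (equivalently $S$-) modules, with projective covers the modules themselves --- which pins the socle of a projective $T$-module to the socle of the module it represents. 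The rest is bookkeeping: keeping the left and right sides straight across the three successive (anti)equivalences ${\rm Hom}_R(-,Q)$, $T\cong T^{\prime}$, and the Auslander equivalence for $S$; with that done, everything else reduces, as indicated, to Theorem~\ref{left Kothe} with ``cyclic'' replaced by ``square-free''.
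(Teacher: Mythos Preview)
Your proposal is correct and follows essentially the same route as the paper: mirror the proof of Theorem~\ref{left Kothe} with ``square-free top'' in place of ``cyclic top'', invoke Corollary~\ref{top square-free is cyclic} to recover cyclicity where needed, and handle $(6)\Leftrightarrow(7)\Leftrightarrow(8)$ via Proposition~\ref{equivalence}. The only minor deviation is that the paper reaches $(2)$ from $(1)$ directly---nonzero tops give maximal submodules in every nonzero module, hence superfluous radicals, hence cyclic summands, hence $R$ is left K\"othe and so left Artinian by [8, Theorem~4.4]---without re-running the Chase--Bass argument you use for $(1)\Rightarrow(3)$; your explicit flagging of the socle-matching needed in $(7)\Leftrightarrow(8)$ is, if anything, more careful than the paper's one-line treatment of that step.
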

\begin{proof}
(1) $\Rightarrow$ (2).  Let $0 \neq M = \bigoplus_{\Lambda}M_{\lambda}$, where $\Lambda$ is an index  set and each $M_{\lambda}$ is a left $R$-module with nonzero square-free cyclic top. Hence,  $Rad(M_{\lambda}) \neq M_{\lambda}$ for each $\lambda \in\Lambda$.   Since each  $M_{\lambda}$ has a cyclic top, $Rad(M) = \bigoplus_{\Lambda} Rad(M_{\lambda}) \neq \bigoplus_{\Lambda}M_{\lambda} = M$ by [32, Proposition 21.6 part (5)].  So every nonzero left $R$-module contains a maximal submodule.  It follows that for each left $R$-module $M$, every proper submodule of $M$ is contained in a maximal submodule.  Now let  $M$ be a module with square-free cyclic top. Then by [32, Proposition   21.16 (3)],  $Rad(M)$ is a superfluous submodule of $M$, and so by Lemma \ref{square-free top},  $M$ is cyclic. Thus,  every  left $R$-module is a direct sum of  cyclic modules, i.e., $R$ is a left  K{\"o}the  ring. Hence by [8, Theorem 4.4],  $R$ is  a left Artinian ring,  and so every  nonzero left $R$-module has a nonzero socle.

(2) $\Rightarrow$ (3). By Theorem \ref{left Kothe}, $R$ is  a left K{\"o}the ring. Thus by  [8, Theorem 4.4],  $R$ is  a left Artinian ring and  so  by [1, Corollary 28.8], $R$ is a left prefect ring.

(3) $\Rightarrow$ (5).
Since $R$ is a  left perfect ring, by Corollary \ref{top square-free is cyclic},  $R$ is a left K{\"o}the ring. So by Theorem \ref{left Kothe}, $R$ is   left pure semisimple.

(5) $\Rightarrow$ (4). It is clear, since by  [8, Theorem 4.4],    every left pure semisimple  ring is a left Artinian ring.

(4) $\Rightarrow$ (1).  Since $R$ is  a  left Artinian ring,  $R$ is a left prefect ring by [1, Corollary 28.8]. Also, by Corollary \ref{top square-free is cyclic},   $R$ is  a left K{\"o}the ring.  Since every   left $R$-module  is a direct sum of modules with square-free top, we conclude that $R$ is a strongly left  K{\"o}the  ring.

(5) $\Rightarrow$ (6).  By  [8, Theorem 4.4],   $R$ is  a left Artinian ring. Hence $R$ is a left prefect ring. Also, by Corollary \ref{top square-free is cyclic}, $R$ is  a left K{\"o}the ring.  Thus,  $R$ is of finite length and since every  (finitely generated)   indecomposable  left $R$-module is cyclic, $R$ is  of left bounded representation type,   and hence by [32, Proposition 54.3], $R$  is  of finite representation type.

In the next steps of proof, for the finite representation type ring $R$, by  Proposition \ref{equivalence},  we have the following assumptions.

\noindent (i) The left Auslander ring  of $R$ is  $T={\rm End}_R(U)$, where  $U=U_1\oplus\cdots\oplus U_n $ and  $\{U_1,\ldots,U_n\}$ \indent is   a complete set of representatives of the isomorphic classes of finitely
generated inde- \indent composable left $R$-modules.\\
(ii)  The  left Morita dual ring of $R$  is   $S=End(Q)$, where  $Q\cong E(S_1)\oplus\cdots\oplus E(S_m)$, where  \indent $\{S_i~|~1\leq i\leq m\}$ is a complete  set  of representatives of the isomorphic
classes of simple \indent left $R$-modules.\\
(iii)   $\{Hom_R(U_1,Q),\ldots,Hom_R(U_n,Q)\}$ is a complete  set of  representatives of the isomor-  \indent  phic  classes of finitely generated indecomposable  right  $S$-modules.

(6) $\Rightarrow$ (7). Since $\{Hom_R(U_1,Q),\ldots,Hom_R(U_n,Q)\}$ is a complete set of representatives of the isomorphic classes of finitely generated indecomposable right $S$-modules,   and since  each $U_i$ has  a square-free top,    by Proposition \ref{equivalence} (i),  each indecomposable right $S$-module ${\rm Hom}_R(U_i,Q)$   has a square-free socle.  

(7) $\Rightarrow$ (8).  Since $R$  of finite representation type,  by Proposition \ref{equivalence} (d),  the  left Auslander ring  $T$ of $R$ is isomorphic to the  right  Auslander ring  $T^\prime$ of  $S$. Thus,  every   indecomposable projective  right  $T$-module  has a  square-free  socle.

(8) $\Rightarrow$ (6).  By Proposition \ref{equivalence} (d),  $T\cong T^\prime$,  where   $T^\prime$ is the right  Auslander ring  of $S$. By Proposition \ref{equivalence} (e) and (c),  each of $Hom(U_{1} , Q),\ldots, Hom(U_{n} , Q)$ has a square-free socle. By Proposition \ref{equivalence}  (m), we have $soc(Hom(U_{i} , Q)) \cong Hom_{R}(top(U_i), Q)$ and since  each $soc(Hom(U_{i} , Q))$  is  square-free, we conclude that  $top(U_{i})$  is  also square-free for each $i$. Thus $R$ is of finite representation  type and  every  (finitely generated) indecomposable  left $R$-module has a  square-free top.

  (6) $\Rightarrow$ (9). It is by Proposition \ref{Projective cover}(i).

  (9) $\Rightarrow$ (1). It is by Lemma \ref{square-free top}.

    (6) $\Rightarrow$ (10). It is by  Propositions  \ref{Projective cover} and [1, Proposition 27.10] and also by [1, Lemma 27.3]. Also, for $1\leq i\leq n$, every indecomposable module  $Re_i/Ie_i$ has a  square-free top.

    (10) $\Rightarrow$ (1).  Since every indecomposable left $R$-module is  isomorphic to  the cyclic module $Re_i/Ie_i$  for some $1\leq i\leq n$ and  a  left ideal $I\subseteq J$, it is enough to show that $R$ is  of finite representation type. Since $R$ is an Artinian ring, each  $Re_i$ has finite length,  and then  we set $n= max \{length(Re_i)~|~1\leq i\leq n\}$. By assumption every indecomposable module $M$ is a factor of $Re_i$, thus $length(M) \leq n$ and hence $R$ is of bounded representation type. So by [32, Proposition 54.3], $R$ is of finite representation type. Then every left $R$-module is a direct sum of indecomposable cyclic modules and each of indecomposable cyclic module $Re/Ie$ has  a square-free top.
         \end{proof}

 A ring $R$ is said to be {\it left duo} (resp., {\it  left quasi-duo}) if every left ideal (resp. maximal left ideal) of $R$ is an ideal. Obviously, left duo rings are left
quasi-duo. Other examples of left quasi-duo rings include, for instance, the commutative rings, the local rings, the rings in which every non-unit element has a (positive)
power  that  is a  central element, the endomorphism rings of uniserial modules, the  power series rings and  the rings of upper triangular matrices over any of the above-mentioned  rings (see  [33]).  It is easy to see that if a ring $R$ is left duo (resp. left quasi-duo), so is any factor ring of $R$.  By a result of  [33],  a ring $R$ is left quasi-duo if and only if  $R/J(R)$ is left quasi-duo, and  if $R$ is left quasi-duo, then $R/J(R)$ is a subdirect product of division rings. The  converse is not true in general (see [23, Example 5.2]), but   the converse is true when $R$ has only a finite number of simple left $R$-modules up to isomorphisms (see [23, Page 252]).  Consequently,  a semilocal ring $R$ is left  (right) quasi-duo if and only if  $R/J(R)$  is a direct product of division rings. Recall that a ring $R$ is called basic if $R/J$ is a direct sum of division rings and idempotents in $R/J$ can be lifted to $R$. Thus,  any basic ring is a left and a right quasi-duo ring.

The following theorem shows that for any left quasi-duo ring $R$ the concepts of   ``left  K{\"o}the"   and  ``strongly left  K{\"o}the"   coincide.

\vspace*{1mm}

\begin{The}\label{quasi duo left Kothe}
The following statements  are equivalent for a left quasi-duo ring $R$.  \vspace*{0.2cm} \\
\indent {{\rm (1)}}   $R$ is a left  K{\"o}the  ring.\\
\indent {{\rm (2)}}   $R$ is a strongly left  K{\"o}the  ring.\\
\indent {{\rm (3)}}  $R$ is  of finite representation type and  every   indecomposable left $R$-module has \indent\indent a cyclic top.\\
\indent {{\rm (4)}}  $R$ is  of finite representation type and  every   indecomposable left $R$-module has \indent\indent a square-free  top.\\
\indent {{\rm (5)}}  $R$ is  of finite representation  type  and for the  left Morita dual ring of $R$, every  \indent\indent (finitely generated) indecomposable right module has a (essential) cyclic socle.\\
\indent {{\rm (6)}}  $R$ is  of finite representation  type  and for the  left Morita dual ring of $R$, every  \indent\indent (finitely generated) indecomposable right module has a square-free socle.\\
\indent {{\rm (7)}}  $R$ is  of finite representation type  and the left Auslander ring of $R$ is a generalized \indent\indent right $QF$-2 ring.
\end{The}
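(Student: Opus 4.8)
The plan is to use the left quasi-duo hypothesis, together with finite representation type, to force $R$ to be \emph{basic}, which collapses the distinction between ``cyclic top'' and ``square-free top'' for finitely generated modules; the equivalences then follow by assembling Theorems~\ref{left Kothe} and \ref{strongly left Kothe} and the Morita duality of Proposition~\ref{equivalence}. First I would note that each of the seven conditions forces $R$ to be left Artinian: for (3)--(7) this is part of ``of finite representation type''; for (1) it is [8, Theorem 4.4]; and for (2) it follows from Theorem~\ref{strongly left Kothe} ((1) $\Leftrightarrow$ (4)). Hence throughout we may assume $R$ is left Artinian, hence semilocal and left quasi-duo; by the facts on quasi-duo rings recalled above (a semilocal left quasi-duo ring has $R/J(R)$ a finite direct product of division rings), $R$ is then basic. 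Equivalently, all the multiplicities $t_i$ in the decomposition $R\cong\oplus_{i=1}^n(Re_i)^{(t_i)}$ of Proposition~\ref{Projective cover} equal $1$.

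\emph{The key equivalence.} Over a basic semiperfect ring a finitely generated left module $M$ has cyclic top iff it has square-free top: by Proposition~\ref{Projective cover}(i), $top(M)$ is cyclic iff $s_i\le t_i$ for all $i$, and since $t_i=1$ this reads $s_i\in\{0,1\}$ for all $i$, which by the same proposition says exactly that $top(M)$ is square-free. Applying this to the indecomposable left $R$-modules (all finitely generated, since $R$ is of finite representation type) gives (3) $\Leftrightarrow$ (4); the direction (4) $\Rightarrow$ (3) is in any case immediate from Corollary~\ref{top square-free is cyclic}. The same computation on the right shows that over any basic ring ``cyclic socle'' and ``square-free socle'' agree for finitely generated modules.

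\emph{Assembling the cycle.} Now (1) $\Leftrightarrow$ (3) is the equivalence (1) $\Leftrightarrow$ (6) of Theorem~\ref{left Kothe}, while (2) $\Leftrightarrow$ (4), (2) $\Leftrightarrow$ (6) and (2) $\Leftrightarrow$ (7) are respectively the equivalences (1) $\Leftrightarrow$ (6), (1) $\Leftrightarrow$ (7) and (1) $\Leftrightarrow$ (8) of Theorem~\ref{strongly left Kothe} (over a ring of finite representation type ``indecomposable'' and ``finitely generated indecomposable'' coincide). For (5) I would use the left Morita dual ring $S={\rm End}_R(Q)$ of Proposition~\ref{equivalence}, with $Q\cong E(S_1)\oplus\cdots\oplus E(S_m)$ a direct sum of pairwise non-isomorphic indecomposable injectives: then $S/J(S)\cong\prod_{i=1}^m{\rm End}_R(E(S_i))/J({\rm End}_R(E(S_i)))$ is a direct product of division rings, so $S$ is basic, and the right-hand version of the key equivalence shows that for a finitely generated indecomposable right $S$-module ``cyclic socle'' equals ``square-free socle'' (the socle being essential, as $S$ is right Artinian). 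This yields (5) $\Leftrightarrow$ (6) and closes the cycle. Alternatively (4) $\Leftrightarrow$ (5) can be read off directly from $soc({\rm Hom}_R(U_i,Q))\cong{\rm Hom}_R(top(U_i),Q)$ (Proposition~\ref{equivalence}(m)) and Proposition~\ref{equivalence}(h),(i).

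\emph{Expected main obstacle.} The real content is the reduction to the basic case: everything hinges on the fact that a left Artinian left quasi-duo ring is basic, which is what makes the ``cyclic'' and ``square-free'' forms of every condition merge; after that the theorem is bookkeeping against the two preceding theorems and Proposition~\ref{equivalence}. The only point needing a little care is (5), where one must check that the Morita dual $S$ is itself basic — but this is automatic, $S$ being the endomorphism ring of the \emph{minimal} injective cogenerator.
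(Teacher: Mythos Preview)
Your argument is correct and rests on the same underlying fact as the paper's: a semilocal left quasi-duo ring has $R/J$ a finite product of division rings, and then one assembles Theorems~\ref{left Kothe} and~\ref{strongly left Kothe}. The packaging differs slightly. The paper proves $(1)\Rightarrow(2)$ directly by writing each indecomposable as $R/I_i$ and computing $\mathrm{top}(R/I_i)\cong\bigoplus_{I_i\subseteq P_j}R/P_j$, a sum over pairwise distinct maximal ideals and hence square-free; you instead observe once and for all that $R$ is basic, so all $t_i=1$ in Proposition~\ref{Projective cover}(i) and ``cyclic top'' coincides with ``square-free top'' for every finitely generated module, which immediately gives $(3)\Leftrightarrow(4)$. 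Your formulation is a bit cleaner and, more to the point, handles condition~(5) with more care: the paper disposes of it with ``$(1)\Leftrightarrow(3)\Leftrightarrow(5)$ by Theorem~\ref{left Kothe}'', but Theorem~\ref{left Kothe} contains no statement about the Morita dual ring, so your observation that $S=\mathrm{End}_R(Q)$ is itself basic (being the endomorphism ring of the minimal cogenerator), and hence that ``cyclic socle'' and ``square-free socle'' agree for finitely generated right $S$-modules, actually supplies the missing justification for $(5)\Leftrightarrow(6)$.
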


\begin{proof}
(1) $\Rightarrow$ (2).  Since $R$ is  a left K{\"o}the ring,  By  [8, Theorem 4.4],  $R$ is of finite representation
type and each indecomposable left $R$-module is cyclic. Thus,  we can assume that  $\{R/I_1,\ldots, R/I_n\}$  is a complete set of representatives of the isomorphic classes of finitely generated indecomposable left $R$-modules, where $I_1, \ldots, I_n$  are left ideals of $R$. But $R$ has only a finite number of simple
left $R$-modules (up to isomorphisms)  and   $R$ is left quasi-duo, so by [23, Page 252],   $R/J$  is a finite  direct product of division rings and   for each maximal  (left) ideals $P_1\neq P_2$ of $R$,
$R/P_1\not\cong R/P_2$.  Now by [1, Corollary 15.18],  we have ${\rm Rad}(R/I_i)=JR/I_i$ for each $1\leq i\leq n$. Thus,
$${\rm top}(R/I_i)= \frac{R/I_i}{J(R/I_i)}\cong \frac{R}{J+I_i}\cong  \bigoplus_{I_i\leq {P_j}\in {\rm Max}(R)}\frac{R}{P_j}\cong\frac{R}{\bigcap_{I_i\leq P_j\in {\rm Max}(R)}P_j}.$$
  So,   every   indecomposable left $R$-module has  a square-free top. Hence by Theorem \ref{strongly left Kothe}, the proof is complete.

(2) $\Rightarrow$ (1).  Since $R$ is of finite representation type, we can assume that  $\{U_1, \ldots, U_n\}$  is a complete set of representatives of the isomorphic classes of finitely generated indecomposable left $R$-modules. Since  by our hypothesis each $U_i$  $(1\leq i\leq n)$ has a square-free top, each $U_i$ is cyclic by Corollary \ref{top square-free is cyclic}, and hence every left $R$-module is a direct sum of cyclic modules, i.e., $R$ is a left  K{\"o}the  ring.

 (2) $\Leftrightarrow$ (4)  $\Leftrightarrow$ (6)  $\Leftrightarrow$ (7). By Theorem \ref{strongly left Kothe}.

 (1) $\Leftrightarrow$ (3) $\Leftrightarrow$ (5). By Theorem \ref{left Kothe}.
\end{proof}

 \section{\bf Charactrizations of very strongly left  K{\"o}the rings}
The rings $R$ satisfying  the following $(*)$ condition  were first studied
by Tachikawa [29] in 1959, by using duality theory. 

\vspace*{1mm} 

\noindent {\it $(*)$  $R$   is a  right Artinian  ring and  every finitely generated indecomposable right  $R$-module  is local. } 
\vspace*{1mm} 

\noindent But, Singh and Al-Bleahed  [28] have studied  rings $R$ satisfying  $(*)$ without using duality.
Since each local  module is  cyclic indecomposable and it has  a simple top,   the  condition $(*)$  is  a stronger condition than the  strongly right K{\"o}the  condition. In fact, we will see that the rings with $(*)$ condition are exactly very strongly right K{\"o}the rings. In the first result we give several characterizations of  very strongly left K{\"o}the rings.

\begin{The}\label{left local type}
The following statements are equivalent for any  ring $R$.  \vspace*{0.2cm} \\
\indent {{\rm (1)}}  $R$ is a very strongly left  K{\"o}the  ring.\\
\indent {{\rm (2)}}  Every  nonzero left $R$-module is a direct sum of  modules with nonzero socle and  \indent\indent simple top.\\
\indent {{\rm (3)}}  $R$  is  left  perfect,   and every left  $R$-module is a direct sum of  modules with  simple \indent\indent top.\\
\indent {{\rm (4)}}  $R$ is  left Artinian,   and  every   left $R$-module  is a direct sum of modules with simple \indent\indent top.\\
\indent {{\rm (5)}} $R$ is  left pure semisimple,   and  every   left $R$-module  is a direct sum of modules with \indent\indent simple top.\\
\indent {{\rm (6)}}  $R$ is  of finite representation type,  and  every  $($finitely generated$)$ indecomposable left  \indent\indent $R$-module  has a simple  top.\\
 \indent {{\rm (7)}} Every left $R$-module is a direct sum of lifting modules.\\
 \indent {{\rm (8)}} Every left $R$-module is a direct sum of local modules.\\
\indent {{\rm (9)}}  $R$ is  of finite representation  type,   and for the  left Morita dual ring of $R$
every  \indent\indent (finitely generated) indecomposable right module has a simple socle.\\
\indent {{\rm (10)}}  $R$ is  of finite representation  type,   and the left  Auslander ring  of $R$  is a right \indent\indent $QF$-2 ring.\\
 \indent {{\rm (11)}}   $R$ is of finite representation type,   and   for each  indecomposable left   $R$-modules $U$,
 \indent\indent  $top(U) \cong Re_i/Je_i$  where  $e_i \in \{e_1, \ldots, e_n\}$ and $\{e_1, \ldots, e_n\}$ is a basic set of primi- \indent\indent tive  idempotents for $R$.\\
\indent {{\rm (12)}}   $R$ is  an Artinian  right serial  ring,   $R\cong \oplus_{i=1}^n (Re_i)^{(t_i)}$, where  $\{Re_1, \ldots, Re_n\}$   is a  \indent\indent complete   set of   representatives of the isomorphic classes of
indecomposable projec-  \indent\indent tive left $R$-modules with $n,~t_i\in\Bbb{N}$,  and  each $Re_i$ has a simple  top and  each inde- \indent\indent composable  left $R$-module is  isomorphic to $Re_i/Ie_i$  for some $1\leq i\leq n$
 and  a  left \indent\indent ideal $I\subseteq J$.
\end{The}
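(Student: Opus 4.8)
The plan is to run the argument along the same lines as Theorems~\ref{left Kothe} and~\ref{strongly left Kothe}, using two elementary observations: a simple module is cyclic and square-free, and an (arbitrary, not necessarily finitely generated) module $M$ has simple top if and only if it is local, since $M/Rad(M)$ simple forces $Rad(M)$ to be the unique maximal submodule of $M$ (cf.\ Proposition~\ref{Projective cover}(ii)). Thus $(1)$ makes $R$ a strongly left K{\"o}the ring, so by Theorem~\ref{strongly left Kothe} $R$ is left perfect, left Artinian, left pure semisimple and of finite representation type; conversely, once $R$ is left perfect, $Rad(M)\ll M$ for every left $R$-module $M$, so Lemma~\ref{square-free top} turns ``simple top'' into ``cyclic'', and ``direct sum of modules with simple top'' is then literally ``direct sum of local modules'', i.e.\ $(1)$. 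The passage from $(2)$ to left perfectness is once more the Chase argument (Lemma~\ref{chase}) together with Bass' theorem and left $T$-nilpotence of $J$, verbatim as in the proof of Theorem~\ref{left Kothe}, step $(2)\Rightarrow(3)$ --- the ``nonzero socle'' hypothesis in $(2)$ being exactly what gives $J$ left $T$-nilpotent once $R/J$ is known to be semisimple. Finally, $(5)\Leftrightarrow(6)$ because over a left pure semisimple ring every left module is a direct sum of finitely generated indecomposables, and an indecomposable direct summand of a direct sum of modules with simple top itself has simple top; and $(6)\Leftrightarrow(11)$ is immediate from Proposition~\ref{Projective cover}(ii).

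For the duality conditions $(9)$ and $(10)$ I would invoke Proposition~\ref{equivalence}. Let $S$ be the left Morita dual of $R$, $T$ the left Auslander ring, and $\{U_1,\dots,U_n\}$ a complete set of finitely generated indecomposable left $R$-modules. By Proposition~\ref{equivalence}(m), $soc(Hom_R(U_i,Q))\cong Hom_R(top(U_i),Q)$, and since the duality $Hom_R(-,Q)$ carries simple modules to simple modules, $top(U_i)$ is simple for all $i$ iff $soc(Hom_R(U_i,Q))$ is simple for all $i$; as $\{Hom_R(U_i,Q)\}$ is a complete set of indecomposable right $S$-modules, this is exactly $(9)$. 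Since $T$ is simultaneously the right Auslander ring of $S$, and the indecomposable right $S$-modules correspond to the indecomposable projective right $T$-modules, ``every indecomposable right $S$-module has simple socle'' says precisely that every indecomposable projective right $T$-module has simple (necessarily essential) socle, i.e.\ that $T$ is a right $QF$-2 ring; this gives $(9)\Leftrightarrow(10)$. I expect these steps to be routine given Proposition~\ref{equivalence}.

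Next I would handle $(7)$ and $(8)$. A local module is cyclic with simple top and is lifting, so pure semisimplicity (available by $(6)$) yields that every left $R$-module is a direct sum of local modules, hence of lifting modules: $(6)\Rightarrow(8)\Rightarrow(7)$. For $(7)\Rightarrow(6)$ one first shows that $(7)$ forces $R$ to be left perfect: decomposing $_RR$ exhibits it as a finite direct sum of supplemented (in fact lifting) modules, so $R$ is at least semiperfect, and the remaining left $T$-nilpotence of $J$ is forced by the impossibility of decomposing a suitable ``bad'' module $M$ with $Rad(M)=M$ into lifting summands (a Chase-type argument in the spirit of Theorem~\ref{left Kothe}); once $R$ is left perfect, every lifting left $R$-module is a direct sum of hollow modules and, over a left perfect ring, a hollow module is local, whence every left module is a direct sum of indecomposable local modules, giving $(6)$ via Lemma~\ref{Zimmermann} and [8, Theorem 4.4]. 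I regard establishing ``$(7)\Rightarrow R$ left perfect'' and invoking the decomposition of lifting modules over left perfect rings into local summands as the first main obstacle.

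It remains to match $(6)$ with $(12)$. The clauses of $(12)$ other than ``$R$ Artinian right serial'' are immediate: each $Re_i$ has simple top because $e_i$ is primitive and $R$ is semiperfect, and by Proposition~\ref{Projective cover} every indecomposable left $R$-module is local, hence cyclic with indecomposable projective cover $Re_i$, so is isomorphic to $Re_i/Ie_i$ with $I\subseteq J$; conversely, from such a description one bounds the lengths of the indecomposables by $\max_i \ell(Re_i)$, obtains finite representation type from [32, Proposition 54.3], and reads off simple tops since $top(Re_i/Ie_i)$ is a quotient of the simple $top(Re_i)$, exactly as in Theorem~\ref{strongly left Kothe}, step $(10)\Rightarrow(1)$. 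The one substantive point is that $R$ is Artinian \emph{right} serial, which I would obtain by transporting the left-module condition through the Morita duality of Proposition~\ref{equivalence}: ``every indecomposable left $R$-module has simple top'' becomes a statement about simple socles of indecomposable right $S$-modules, and together with finite representation type the classical structure theory of serial rings then yields seriality of the appropriate side of $S$, hence right seriality of $R$ (and Artinianness from finite representation type). I regard this last step --- extracting right-seriality from the simple-top condition via duality and serial-ring theory --- as the second main obstacle of the proof.
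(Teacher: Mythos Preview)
Your handling of $(1)$--$(6)$, $(8)$, and $(9)$--$(11)$ matches the paper's proof closely; the paper runs these through Theorem~\ref{left Kothe} and Proposition~\ref{equivalence} essentially as you do.

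The two places you flag as obstacles are exactly where the paper imports external results rather than arguing directly. For $(6)\Leftrightarrow(7)$ the paper simply cites [14, Theorem~2.1] (Guil Asensio--Keskin T\"ut\"unc\"u), which states precisely that every left $R$-module is a direct sum of lifting modules if and only if $R$ has finite representation type and every finitely generated indecomposable left $R$-module has simple top. Your proposed hands-on route (first forcing $R$ left perfect, then decomposing lifting modules into hollow, hence local, summands) is plausible in outline but unnecessary given this reference.

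Your approach to right seriality in $(12)$, however, has a genuine gap. The Morita duality of Proposition~\ref{equivalence} relates left $R$-modules to right $S$-modules where $S=End_R(Q)$, and $S$ is in general \emph{not} isomorphic to $R$. So even if you could extract a seriality statement for $S$ from ``every indecomposable right $S$-module has simple socle,'' there is no mechanism in Proposition~\ref{equivalence} to carry this back to the right-module structure of $R$ itself; the duality says nothing about $Mod$-$R$. The paper instead invokes [28, Theorem~2.4] (Singh--Al-Bleahed), which proves directly that a left Artinian ring in which every finitely generated indecomposable left module is local is Artinian right serial. That is the correct tool here, and your detour through $S$ does not reach the conclusion.
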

\begin{proof}
The equivalence of part (1) to (6) is obtained by Theorem \ref{left Kothe}, since every module with simple top is a module with square-free cyclic top.

 (6) $\Leftrightarrow$ (7). It is by [14, Theorem 2.1].

(1) $\Leftrightarrow$ (8). It is by Proposition \ref{Projective cover} (ii).

In the next steps of proof, for the finite representation type ring $R$, by  Proposition \ref{equivalence},  we have the following assumptions.

\noindent (i) The left Auslander ring  of $R$ is  $T={\rm End}_R(U)$, where  $U=U_1\oplus\cdots\oplus U_n $ and  $\{U_1,  \ldots,U_n\}$ \indent is   a complete set of representatives of the isomorphic classes of finitely
generated inde- \indent composable left $R$-modules.\\
(ii)   The  left Morita dual ring of $R$  is    $S=End(Q)$, where  $Q\cong E(S_1)\oplus\cdots\oplus E(S_m)$, where  \indent $\{S_i~|~1\leq i\leq m\}$ is a complete   set  of representatives of the isomorphic
classes of simple \indent left $R$-modules.\\
(iii)   $\{Hom_R(U_1,Q), \ldots, Hom_R(U_n,Q)\}$ is a complete  set of  representatives of the isomor-  \indent phic classes of finitely generated indecomposable  right  $S$-modules.

(8) $\Rightarrow$  (9).
 By Proposition \ref{equivalence} (c) and (e), it is enough to show that each  of indecomposable right $S$-modules $Hom_R(U_1, Q), \ldots, Hom_R(U_n, Q)$,   has a simple socle.  By our hypothesis,  each  $U_i$  has  a simple top.  By Proposition \ref{equivalence}  (n), we have  $soc(Hom(U_i, Q)) \cong soc(\bigoplus_{k=1}^{m}Hom(S_{i}, Q)) = Hom(top(U_i) , Q)$ is simple. It  follows that  every indecomposable right $S$-module  $Hom(U_i,Q)$   has a simple socle.

 (9) $\Rightarrow$  (10).
By Proposition \ref{equivalence} (d),  $T\cong T^\prime$,  where   $T^\prime$ is the right  Auslander ring  of $S$. Since  each  of indecomposable right $S$-module has a simple socle,  so   every finitely generated indecomposable projective right $T$-module  has a simple socle, i.e., $T$ is  a right  $QF$-2 ring.

 (10) $\Rightarrow$  (6).  By Proposition \ref{equivalence} (d),  $T\cong T^\prime$,  where   $T^\prime$ is the right  Auslander ring  of $S$. By Proposition \ref{equivalence} (e) and (c),  each of $Hom(U_{1} , Q), \ldots, Hom(U_{n} , Q)$ has a simple socle. By Proposition \ref{equivalence}  (m), we have $soc(Hom(U_{i} , Q)) \cong Hom_{R}(top(U_i), Q)$ and since  each $soc(Hom(U_{i} , Q))$  is  simple, we conclude that  $top(U_{i})$  is  also simple for each $i$. Thus,
 $R$ is of finite representation   type and  every  $($finitely generated$)$ indecomposable  left $R$-module has a simple top.

 (6) $\Rightarrow$  (11). By Prposition \ref{Projective cover} (ii)

   (11) $\Rightarrow$  (6). It is clear.

     (8) $\Rightarrow$ (12).   By [28, Theorem 2.4],    $R$ is  an Artinian  right serial  ring.  Also, by [1, Proposition 27.10],    $R\cong \oplus_{i=1}^n(Re_i)^{(t_i)}$ where $n,~t_i\in\Bbb{N}$, $i=1, \ldots, n$ and $\{e_1,\cdots, e_n\}$  is a basic  set   of idempotents of $R$. By Proposition  \ref{Projective cover} (ii),   each $Re_i$ is local for $i=1, \ldots, n$.  Now  let $M$ be a local left $R$-module. Since  any local module is cyclic,  by  [1, Lemma 27.3], $M\cong Re/Ie$ for some idempotent $e\in R$ and some left ideal $I \subseteq J$,  and  the natural map
$Re\rightarrow   Re/Ie\rightarrow  0$  is a projective cover  of $M$. Since $M$ is indecomposable cyclic left $R$-module with simple top, by Propositon  \ref{Projective cover} (ii), $Re$ is  an indecomposable direct summand of $_RR$ and so by [1, Corollary  7.4],
$e$ is a primitive idempotent. It follows that by Proposition [1, Proposition 27.10], $Re\cong Re_i$ for some $i$ ($1\leq i\leq n$) and so $M\cong Re_i/Ie_i$, also $\{Re_1, \ldots, Re_n\}$ is a  (finite) complete   set of   representatives of the isomorphic classes of indecomposable projective left $R$-modules. By Proposition  \ref{Projective cover} (ii), $top(M) = Re_i/Je_i$ is simple  for $1\leq i\leq n$ and proof is complete.

   (12) $\Rightarrow$ (1).  Since every indecomposable left $R$-module is  isomorphic to  the cyclic module $Re_i/Ie_i$  for some $1\leq i\leq n$
and  a  left ideal $I\subseteq J$, it is enough to show that $R$ is  of finite representation type. Since $R$ is an Artinian ring, each  $Re_i$ has finite length. Set $n= max \{length(Re_i)~|~1\leq i\leq n\}$.  By assumption every indecomposable module $M$ is a factor of $Re_i$, thus $length(M) \leq n$ and hence $R$ of bounded representation type. So by [32, Proposition 54.3], $R$ is of finite representation type. Then every left $R$-module is a direct sum of indecomposable cyclic modules with simple top.
 \end{proof}

In the rest of paper, as an application, we combine some results of Singh and Al-Bleahed [28] with our main theorems and obtain some interesting results on local K{\"o}the rings and Abelian K{\"o}the rings. Also we give a new generalization of K{\"o}the-Cohen-Kaplansky theorem. The following lemmas are helpful.

\begin{Lem}\label{Abelian} \textup {([15, Proposition 3])}. Let $R$ be a left Artinian ring. Then $R$ is a finite
product of local rings if and only if $R$ is Abelian.
\end{Lem}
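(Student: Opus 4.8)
The plan is to establish the two implications separately: the ``only if'' direction (a finite product of local rings is Abelian) is purely formal and does not even use the Artinian hypothesis, whereas the ``if'' direction relies on the structure theory of semiperfect rings together with the assumption that $R$ is left Artinian.

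For the ``only if'' direction, suppose $R\cong R_1\times\cdots\times R_k$ with each $R_i$ local. A local ring has no idempotents other than $0$ and $1$, and these are central; moreover an idempotent of a finite direct product is exactly a tuple of idempotents of the factors. Hence every idempotent of $R$ is a tuple whose entries are $0$'s and $1$'s, and such a tuple is central. Thus $R$ is Abelian.

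For the ``if'' direction, assume $R$ is left Artinian and Abelian. Since left Artinian rings are semiperfect, I would fix a decomposition $1=e_1+\cdots+e_n$ into orthogonal primitive idempotents. Abelianness forces each $e_i$ to be central, so we have a family of central orthogonal idempotents summing to $1$, and this yields a ring isomorphism $R\cong Re_1\times\cdots\times Re_n$ in which each factor $Re_i$ (which equals $e_iRe_i$, since $e_i$ is central) is a left Artinian ring with identity $e_i$. It remains to show that each $Re_i$ is local.

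Here is where the real content sits. First, $Re_i$ has no idempotent besides $0$ and $e_i$: if $f\in Re_i$ with $f^2=f$, then $f$ and $e_i-f$ are orthogonal idempotents of $R$ summing to $e_i$, so primitivity of $e_i$ gives $f=0$ or $f=e_i$. Since $Re_i$ is left Artinian, hence semiperfect, idempotents lift modulo $J(Re_i)$, so $Re_i/J(Re_i)$ again has no nontrivial idempotents; being semisimple Artinian, it is then a division ring, and therefore $Re_i$ is local. Assembling these factors exhibits $R$ as a finite product of local rings. The step I expect to be the main obstacle --- and the only place where the left Artinian hypothesis is genuinely invoked --- is precisely this passage from ``no nontrivial idempotents'' to ``local'' for the corner ring $Re_i$; it rests on lifting idempotents in the semiperfect ring $Re_i$ and on the Wedderburn--Artin description of $Re_i/J(Re_i)$, while everything else amounts to routine bookkeeping with central idempotents.
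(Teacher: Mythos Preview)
Your proof is correct. Both directions are handled cleanly: the ``only if'' direction is indeed formal, and in the ``if'' direction your use of semiperfectness (to obtain a complete set of orthogonal primitive idempotents, and then to pass from ``$Re_i$ has no nontrivial idempotents'' to ``$Re_i/J(Re_i)$ is a division ring'') is exactly the right mechanism.

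There is nothing to compare against, however: the paper does not supply its own proof of this lemma but simply quotes it from Habeb~[15, Proposition~3]. Your argument is essentially the standard one. If anything, you could shorten the final step slightly: once $e_i$ is a primitive idempotent in the semiperfect ring $R$, it is already known (see, e.g., Anderson--Fuller) that $e_iRe_i$ is a local ring, and since $e_i$ is central this equals $Re_i$; but your more explicit route via lifting idempotents and Wedderburn--Artin is perfectly sound and arguably more self-contained.
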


\begin{Lem}\label{Local}{\rm (See [28, Theorem 2.12])}. Let $R$ be a left Artinian  local ring such that every finitely
generated indecomposable left $R$-module is local. Then either $J^2 = 0$ or $R$ is a uniserial
ring.
\end{Lem}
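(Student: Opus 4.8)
Write $D:=R/J$; since $R$ is left Artinian, $J$ is nilpotent, so $J\neq J^2$. If $J^2=0$ there is nothing to prove, so assume $J^2\neq0$; the plan is to show that $R$ is uniserial. By Proposition~\ref{Projective cover}(ii) a finitely generated left $R$-module is local exactly when its top is simple, so throughout it suffices to exhibit modules with the right tops.

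The key step I would carry out first is to show that $J/J^2$ is one-dimensional as a \emph{right} $D$-vector space. Suppose not; then one can pick $a,b\in J$ whose images in $J/J^2$ are right-$D$-linearly independent. Set $K:=\{(ra,rb):r\in R\}$, the cyclic left submodule of $R\oplus R$ generated by $(a,b)$, and $M:=(R\oplus R)/K$. Since $(a,b)\in J\oplus J={\rm Rad}(R\oplus R)$, we get ${\rm top}(M)=(R\oplus R)/(J\oplus J)\cong (R/J)\oplus(R/J)$, which is not simple; so it remains to see that $M$ is indecomposable. Now $K$ is cyclic with left annihilator inside $J$ (because $a\neq0$), hence local, in particular indecomposable; so in any direct decomposition of $M$ the image of $K$ lies in a single summand of $R\oplus R$, which forces — after an automorphism of $R\oplus R$ — a copy of ${}_RR$ to split off $M$. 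Written out, this yields an invertible matrix $F=(F_{ij})\in M_2(R)$ with $aF_{12}+bF_{22}=0$; invertibility of $F$ modulo $J$ makes one of $F_{12},F_{22}$ a unit, and reducing the relation modulo $J^2$ then exhibits one of $\bar a,\bar b$ as a right $D$-multiple of the other, contradicting right-independence. Hence $M$ is a finitely generated indecomposable left $R$-module with non-simple top, contrary to hypothesis, and therefore $\dim(J/J^2)_D=1$.

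Next I would turn this into uniseriality. By Nakayama's Lemma (valid since $J$ is nilpotent), $J=bR$ for some $b$; inductively $J^k=b^kR$, and $J^k/J^{k+1}$ is a simple right module whenever it is nonzero, i.e.\ for $0\le k<n$, where $J^n=0\neq J^{n-1}$. A standard maximality argument — if a right ideal $I$ is maximal with $I\subseteq J^k$, $I\not\subseteq J^{k+1}$, then $(I+J^{k+1})/J^{k+1}$ is all of the simple module $J^k/J^{k+1}$, whence $I=J^k$ by Nakayama — shows that the right ideals of $R$ form the chain $R\supsetneq J\supsetneq\cdots\supsetneq J^n=0$, so $R$ is right uniserial. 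Consequently $R$ is right Artinian, its finitely generated indecomposable right modules are the cyclic uniserial modules $R/J^k$, and in particular $R$ is of finite representation type with every finitely generated indecomposable right module local. Theorem~\ref{left local type} applied via the left hypothesis then gives that $R$ is Artinian right serial, and its left--right mirror applied via the right statement just obtained gives that $R$ is Artinian left serial; so $R$ is Artinian serial, and being local it is a uniserial ring.

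The hard part will be the key step — constructing $M$ and, above all, proving it indecomposable; the subtlety is that one must keep track of the full $(D,D)$-bimodule structure of $J/J^2$ and of which side governs the possible splittings of $M$. Everything afterwards is the chain-of-ideals argument together with an appeal to the structure theory already developed (Theorem~\ref{left local type} and its mirror).
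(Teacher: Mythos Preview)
The paper does not prove this lemma; it is simply quoted from Singh and Al-Bleahed [28, Theorem 2.12]. So there is no ``paper's own proof'' to compare against, and your attempt must stand on its own.

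Your key step is essentially correct: the module $M=(R\oplus R)/R(a,b)$ has top $D\oplus D$, and if it decomposed then, via uniqueness of projective covers, the cyclic (hence local, hence indecomposable) kernel $K$ would be carried by an automorphism of $R^2$ to a direct sum $L_1\oplus L_2$ of left ideals; indecomposability of $K$ forces one $L_i=0$, yielding the relation $aF_{12}+bF_{22}=0$, and reducing modulo $J^2$ contradicts right $D$-independence of $\bar a,\bar b$. So $\dim(J/J^2)_D=1$, and the Nakayama-style chain argument then gives that $R$ is right uniserial. Fine so far.

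The gap is in the last paragraph. You assert that from right uniseriality ``the finitely generated indecomposable right modules are the cyclic uniserial modules $R/J^k$'', and then invoke Theorem~\ref{left local type} and its mirror. But right uniseriality of $R_R$ does \emph{not} by itself force every finitely generated right module to be a direct sum of cyclics, so this assertion is unjustified, and without it the mirror of Theorem~\ref{left local type} cannot be applied. More decisively: nowhere in your argument do you use the assumption $J^2\neq0$. Yet Theorem~\ref{local left Kothe} of the paper exhibits local rings satisfying the hypothesis (every finitely generated indecomposable left module is local) with $J^2=0$ and $_{D}J\cong D^n$ for $n\geq2$; such rings are right uniserial but \emph{not} left uniserial. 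Your argument from ``$R$ right uniserial'' to ``$R$ uniserial'' would apply verbatim to these rings and prove something false, so that step cannot be valid as written. The passage from right to left uniseriality is precisely where $J^2\neq0$ must enter, and this is the missing idea; in [28] this is handled by a separate argument exploiting $J^2\neq0$, not by the symmetry appeal you sketch.
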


\begin{The}\label{local left Kothe}
The following statements are equivalent for a local ring $R$.  \vspace*{0.2cm} \\
\indent {{\rm (1)}}  $R$ is a (very strongly) left K{\"o}the ring.\\
\indent {{\rm (2)}} $R$ is  of finite  representation type  and the  left  Auslander ring  $T$ of $R$ is a right $QF$-2 \indent\indent ring.\\    
\indent {{\rm (3)}}  Either $R$ is  an Artinian principal ideal ring  or  $R$ is local  with $$J=soc(_RR)=S_1\oplus\cdots\oplus S_n$$     \indent\indent  and $\{R/I_k~|~I_k=S_1\oplus\cdots\oplus S_k, ~ 1\leq k\leq n\}$  is  a mutually non-isomorphic finitely \indent\indent  generated indecomposable left $R$-module. 
\end{The}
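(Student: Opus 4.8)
The plan is to prove the cycle (1) $\Leftrightarrow$ (2) $\Leftrightarrow$ (3). The equivalence (1) $\Leftrightarrow$ (2) should follow almost immediately from the general machinery already in place: a local ring is in particular left quasi-duo, so by Theorem \ref{quasi duo left Kothe} the notions ``left K\"othe'' and ``strongly left K\"othe'' coincide for $R$; moreover, since $R/J$ is a division ring, every nonzero module has square-free top automatically forces it to have simple top (a square-free semisimple module over a simple artinian ring whose simple is unique is just that simple, repeated once), so ``strongly left K\"othe'' and ``very strongly left K\"othe'' also coincide here. Then Theorem \ref{left local type}, specifically the equivalence (1) $\Leftrightarrow$ (10), gives (1) $\Leftrightarrow$ (2) directly. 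This first block I expect to be routine bookkeeping with the earlier theorems.

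For (1) $\Rightarrow$ (3), I would start from the characterization that $R$ is very strongly left K\"othe, i.e. (by Theorem \ref{left local type}(8)) every left $R$-module is a direct sum of local modules; equivalently every finitely generated indecomposable left $R$-module is local. By Lemma \ref{Local} this splits into two cases: either $R$ is uniserial — and a local left artinian uniserial ring is an Artinian principal (left) ideal ring, and one checks the right-hand condition too, landing in the first alternative of (3) — or $J^2 = 0$. In the $J^2=0$ case, $J$ is a semisimple left $R$-module, say $J = S_1 \oplus \cdots \oplus S_n$ with each $S_i$ simple (all isomorphic, since $R$ is local, to the unique simple $S$). I would then exhibit the modules $R/I_k$ with $I_k = S_1 \oplus \cdots \oplus S_k$ as indecomposable: each $R/I_k$ has $J/I_k$ as its radical, which is semisimple of length $n-k$ and is also its socle, and $R/I_k$ has simple top, so it is local, hence indecomposable. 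Pairwise non-isomorphism follows by comparing lengths ($\mathrm{length}(R/I_k) = n-k+1$). The claim $J = \mathrm{soc}(_RR)$ holds because $J^2=0$ forces $J \subseteq \mathrm{soc}(_RR)$, and $\mathrm{soc}(_RR) \subseteq J$ since $R$ is local indecomposable with nonzero radical (the socle cannot contain $1$).

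For (3) $\Rightarrow$ (1), the first alternative is Theorem 1.1 (K\"othe). For the second alternative, with $J^2 = 0$ and $J = S_1 \oplus \cdots \oplus S_n$ semisimple of finite length, $R$ is left (and right) artinian, and I would argue that every finitely generated indecomposable left $R$-module is local: if $M$ is finitely generated indecomposable, then $JM \subseteq \mathrm{soc}(M)$ (as $J^2=0$) and $J(M/JM) = 0$, and one shows using that $R/J$ is a division ring together with $J^2=0$ that $M$ must have simple top — this is where I'd need the structure of finitely generated modules over a local ring with $J^2=0$, essentially reducing to the representation theory of the one-vertex, $n$-loop (or rather $n$-arrow) zero-relation algebra; the hypothesis that the specific family $\{R/I_k\}$ is the complete list of indecomposables (which is part of what (3) asserts when read as in the statement) pins down $R$ enough that every indecomposable is one of these $R/I_k$, each of which is local. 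Then Theorem \ref{left local type}(6) $\Rightarrow$ (1) finishes. \textbf{The main obstacle} I anticipate is exactly this last step: justifying, in the $J^2=0$ branch of (3), that the indecomposables are \emph{only} the $R/I_k$ — i.e. that the combinatorial/representation-theoretic input forces finite representation type with all indecomposables local — rather than merely checking that the listed modules have the stated properties; this presumably leans on Lemma \ref{Local} run in reverse together with a direct analysis (or a citation to [28]) of modules over a local ring with square-zero radical of small dimension, and getting the ``only'' direction clean is the delicate point.
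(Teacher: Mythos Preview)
Your plan is correct and aligns with the paper, whose entire proof is the single line ``It follows by Theorem \ref{left local type} and [28, Theorem 3.8].'' Your unpacking (local $\Rightarrow$ quasi-duo so the three K\"othe notions collapse, giving (1)$\Leftrightarrow$(2) via Theorem \ref{left local type}; then Lemma \ref{Local} to split (1)$\Rightarrow$(3) into the uniserial and $J^2=0$ cases) is exactly what those citations encode, and the ``main obstacle'' you flag for (3)$\Rightarrow$(1) is precisely the content outsourced to [28, Theorem 3.8], so your instinct to lean on [28] there is on the mark.
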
 
\begin{proof}
It follows by  Theorem \ref{left local type} and [28, Theorem 3.8].
\end{proof}

\begin{Cor}\label{local  Kothe}
The following statements are equivalent for a local ring $R$. \vspace*{0.2cm} \\
\indent {{\rm (1)}}  $R$ is a K{\"o}the ring.\\ 
\indent {{\rm (2)}} $R$ is a strongly K{\"o}the ring.\\
\indent {{\rm (3)}}  $R$ is a very strongly K{\"o}the ring.\\
\indent {{\rm (4)}}  Any Morita equivalent to $R$ is a K{\"o}the ring.\\
\indent {{\rm (5)}} $R$ is  of finite  representation type  and the  left (right)  Auslander ring  $T$ of $R$ is a  \indent\indent $QF$-2 ring.\\ 
\indent {{\rm (6)}} $R$ is  an Artinian principal ideal ring. 
\end{Cor}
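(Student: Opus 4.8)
The plan is to run the cycle $(6)\Rightarrow(3)\Rightarrow(2)\Rightarrow(1)\Rightarrow(6)$ and then attach $(1)\Leftrightarrow(4)$ and $(1)\Leftrightarrow(5)$. Two features of a local ring $R$ are used throughout: $R$ is \emph{Abelian} (its only idempotents are $0$ and $1$, hence central), and $R$ is a \emph{basic} semiperfect ring which is its own basic ring, since $R/J$ is a division ring.

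For $(1)\Rightarrow(6)$: a local ring is Abelian, so a local K{\"o}the ring is an Artinian principal ideal ring by Theorem~\ref{abelian Kothe rings}. For $(6)\Rightarrow(3)$: if $R$ is a local Artinian principal ideal ring, write $J=Ra=aR$; then each $J^{k}/J^{k+1}$ is a cyclic left $R/J$-module and a cyclic right $R/J$-module, hence one-dimensional on each side, so the powers of $J$ exhaust the left ideals and the right ideals and $R$ is a chain ring. Consequently every finitely generated indecomposable left (resp.\ right) $R$-module is some $R/J^{k}$, which is local, and $R$ is of finite representation type; Theorem~\ref{left local type}$\,((6)\Rightarrow(1))$ applied on both sides gives that $R$ is a very strongly K{\"o}the ring, i.e.\ $(3)$. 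Finally $(3)\Rightarrow(2)\Rightarrow(1)$ are the strict inclusions recorded in Section~1: a module with simple top has nonzero square-free cyclic top, and a strongly left/right K{\"o}the ring is left/right K{\"o}the by Theorems~\ref{strongly left Kothe} and~\ref{left Kothe}. (Alternatively $(6)\Rightarrow(1)$ is K{\"o}the's classical theorem that Artinian principal ideal rings are K{\"o}the rings.)

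For $(1)\Leftrightarrow(4)$: the implication $(4)\Rightarrow(1)$ is immediate since $R$ is Morita equivalent to itself. Conversely, as $R$ is a basic semiperfect ring equal to its own basic ring, Proposition~\ref{Kawada} (and its right-hand analogue) shows that every ring Morita equivalent to $R$ is left (resp.\ right) K{\"o}the precisely when $R$ is; applying this on both sides gives $(1)\Rightarrow(4)$.

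It remains to insert $(5)$. By Theorem~\ref{local left Kothe}$\,((1)\Leftrightarrow(2))$, for a local ring the condition that $R$ be of finite representation type with left Auslander ring $T$ a right $QF$-$2$ ring is equivalent to $R$ being left K{\"o}the, and for a local ring being left K{\"o}the already forces $R$ to be an Artinian principal ideal ring (finite representation type excludes the non-principal alternative of Theorem~\ref{local left Kothe}$(3)$), hence a K{\"o}the ring; this gives $(5)\Leftrightarrow(1)$ as soon as one checks that, under the other hypotheses, the full statement of $(5)$ holds, namely that $T$ — taken as the left, equivalently the right, Auslander ring — is a $QF$-$2$ ring in the two-sided sense. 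This is the step I expect to be the main obstacle. The way I would handle it: once $R$ is a local Artinian principal ideal ring it is a chain ring, hence a quasi-Frobenius ring admitting a Morita self-duality; by Proposition~\ref{equivalence}(d) the left Auslander ring of $R$ is isomorphic to the right Auslander ring of the Morita dual of $R$, so self-duality makes the left and right Auslander rings of $R$ isomorphic, and then the one-sided $QF$-$2$ conclusions coming from Theorem~\ref{local left Kothe} and its left--right dual merge into the assertion that $T$ is a $QF$-$2$ ring. Everything else is a routine assembly of Theorems~\ref{abelian Kothe rings}, \ref{left Kothe}, \ref{strongly left Kothe}, \ref{left local type}, \ref{local left Kothe} and Proposition~\ref{Kawada}.
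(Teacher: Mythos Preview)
Your cycle $(6)\Rightarrow(3)\Rightarrow(2)\Rightarrow(1)\Rightarrow(6)$ and the equivalence $(1)\Leftrightarrow(4)$ are correct and in the spirit of the paper's proof, which simply cites Proposition~\ref{Kawada}, Theorem~\ref{local left Kothe} and [28, Theorem~3.8]. Your argument for $(1)\Rightarrow(5)$ via self-duality of the local Artinian uniserial ring is also sound.

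The argument for $(5)\Rightarrow(1)$, however, has a genuine gap. You assert that for a local ring, being left K\"othe already forces $R$ to be an Artinian principal ideal ring, on the grounds that ``finite representation type excludes the non-principal alternative of Theorem~\ref{local left Kothe}(3)''. This is false. The conditions in Theorem~\ref{local left Kothe} are all equivalent to $R$ being a (very strongly) left K\"othe ring; in particular, a ring falling under the second alternative of (3), with $J=\mathrm{soc}({}_RR)=S_1\oplus\cdots\oplus S_n$ and $n\geq 2$, \emph{is} left K\"othe and hence \emph{is} of finite representation type. But such a $J$ needs $n$ generators over the division ring $R/J$, so it is not a principal left ideal and $R$ is not an Artinian principal ideal ring. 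Thus ``local and left K\"othe'' does not imply ``Artinian PIR'', and your inference collapses.

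The repair is to use condition (5) as a genuinely two-sided hypothesis: the parenthetical ``left (right)'' signals that both the left and the right Auslander rings are $QF$-$2$. Then Theorem~\ref{local left Kothe}$\,(1)\!\Leftrightarrow\!(2)$ gives that $R$ is left K\"othe from the right $QF$-$2$ property of the left Auslander ring, and its right-hand analogue gives that $R$ is right K\"othe from the left $QF$-$2$ property of the right Auslander ring; together these yield (1). This is what the paper achieves by invoking [28, Theorem~3.8] alongside Theorem~\ref{local left Kothe} and Proposition~\ref{Kawada}.
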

\begin{proof}
It follows by Proposition \ref{Kawada}, Theorem \ref{local left Kothe} and [28, Theorem 3.8]. 
\end{proof}

\begin{Pro}\label{Cor prime ideals commutes-Abelian}
The following statements are equivalent for an Abelian ring $R$.  \vspace*{0.2cm} \\
\indent {{\rm (1)}}    Any Morita equivalent to $R$ is  a K{\"o}the ring.\\
\indent {{\rm (2)}} $R$ is  an Artinian principal ideal ring.\\
\indent {{\rm (3)}} $R\cong R_1\times\cdots\times R_k$, where $k,~n_1,\ldots,n_k\in\Bbb{N}$  and $R_i$ is a   local    Artinian   principal \indent\indent ideal   rings for each $1\leq i\leq k$.\\
\indent {{\rm (4)}} $R$ is a K{\"o}the ring.\\
\indent {{\rm (5)}} $R$ is a strongly K{\"o}the ring.\\
\indent {{\rm (6)}} $R$ is a very strongly K{\"o}the ring.\\
\indent {{\rm (7)}} Every left $R$-module is semidistributive.\\
\indent {{\rm (8)}} Every right $R$-module is semidistributive.\\
\indent {{\rm (9)}} Every left $R$-module is serial.\\
\indent {{\rm (10)}} Every right $R$-module is serial.\\
\indent {\rm (11)} $R$ is an Artinian serial ring.\\
\indent {{\rm (12)}} $R$ is isomorphic to a finite product of Artinian uniserial rings.
 \end{Pro}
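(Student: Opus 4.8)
The plan is to establish the cycle of implications by leveraging the three preceding structural results---Theorem~\ref{abelian Kothe rings} (Abelian K\"othe rings are Artinian principal ideal rings), Theorem~\ref{left local type} (characterizations of very strongly left K\"othe rings), and Corollary~\ref{local Kothe} (the local case)---together with Lemma~\ref{Abelian} (a left Artinian ring is Abelian iff it is a finite product of local rings). The skeleton is: first reduce everything to the local case via a product decomposition, then cite the local results, and finally handle the ``serial/semidistributive/uniserial'' reformulations separately through a known Asano-type equivalence.

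First I would prove the core chain $(4)\Leftrightarrow(5)\Leftrightarrow(6)\Leftrightarrow(2)\Leftrightarrow(3)$. Since an Abelian K\"othe ring is left (and right) Artinian by [8, Theorem~4.4], Lemma~\ref{Abelian} gives $R\cong R_1\times\cdots\times R_k$ with each $R_i$ local Artinian; a module over a finite product decomposes as a product of modules over the factors, so $R$ is (very strongly) K\"othe iff each $R_i$ is, and likewise for the strongly K\"othe property. Then Corollary~\ref{local Kothe} says that for a local ring the notions of K\"othe, strongly K\"othe, very strongly K\"othe, and Artinian principal ideal ring all coincide; feeding this back through the product decomposition yields $(4)\Leftrightarrow(5)\Leftrightarrow(6)$ and that each $R_i$ is an Artinian principal ideal ring, which is exactly $(3)$, and a finite product of such rings is itself an Artinian principal ideal ring, giving $(2)$. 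The implication $(2)\Rightarrow(4)$ is classical (an Artinian principal ideal ring is K\"othe by Theorem~1.1, i.e.\ K\"othe--Cohen--Kaplansky). For $(1)$: $(1)\Rightarrow(4)$ is trivial since $R$ is Morita equivalent to itself, and $(2)\Rightarrow(1)$ follows from Proposition~\ref{Kawada} once we note that the basic ring of $R\cong\prod R_i$ is again a finite product of Artinian principal ideal division-quotient rings, hence K\"othe. The equivalence with $(5)$ of Theorem~\ref{local left Kothe}-type ``finite representation type + Auslander ring is $QF$-2'' can be threaded in via Corollary~\ref{local Kothe}(5) applied factorwise, though it is not strictly among the listed items here.

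Next I would handle $(7)$ through $(12)$. The statements $(11)$ ``$R$ is Artinian serial'' and $(12)$ ``$R$ is a finite product of Artinian uniserial rings'' are equivalent for Abelian $R$: by Lemma~\ref{Abelian} again an Abelian Artinian serial ring is a product of local Artinian serial rings, and a local serial ring is uniserial. For an Abelian (equivalently, a product of local) ring, a local Artinian ring is uniserial iff it is a principal ideal ring---this is exactly the classical Asano result quoted in the Introduction---so $(12)\Leftrightarrow(3)$, closing the loop with the first chain. The module-theoretic reformulations $(9),(10)$ ``every left/right module is serial'' are equivalent to $(11)$ because over an Artinian serial ring every module is a direct sum of uniserial modules (Nakayama), and conversely if every module is serial then in particular the indecomposable projectives are uniserial, forcing $R$ to be serial; the left--right symmetry is automatic once we are inside the product-of-local-rings picture. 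Finally, $(7),(8)$ ``every left/right module is semidistributive'': semidistributive modules generalize serial ones, so $(9)\Rightarrow(7)$ and $(10)\Rightarrow(8)$ are immediate, and for the converse one uses that a ring all of whose left modules are semidistributive is in particular one whose cyclic modules are semidistributive, which over an Abelian Artinian ring forces each local factor to have a distributive (hence uniserial, hence principal-ideal) ideal lattice---again landing at $(3)$ or $(12)$.

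The main obstacle I anticipate is not any single implication but rather marshalling the left--right symmetry cleanly: several of the cited inputs (Corollary~\ref{local Kothe}, the Auslander-ring characterizations) are stated ``left-handedly,'' while $(2)$, $(11)$, $(12)$ and the odd-numbered items $(8),(10)$ are two-sided or right-handed. The reduction via Lemma~\ref{Abelian} to a finite product of \emph{local} Artinian rings is what makes this painless, because over a local ring the relevant distinctions collapse by Corollary~\ref{local Kothe} and the classical Asano theorem, and a finite direct product is manifestly left--right symmetric. So the real content of the proof is the single observation that ``Abelian $+$ (any of these finiteness/K\"othe hypotheses) $\Rightarrow$ left Artinian $\Rightarrow$ product of local rings,'' after which every listed property becomes a factorwise local statement already settled in the excerpt. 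I would therefore organize the write-up as: (a) Abelian K\"othe (in any of the senses $(4)$--$(10)$) $\Rightarrow$ left Artinian $\Rightarrow$ finite product of local rings; (b) apply Corollary~\ref{local Kothe} and Asano factorwise to get $(3)$/$(12)$; (c) observe $(3)\Leftrightarrow(2)$ and $(2)\Rightarrow(1)$ via Proposition~\ref{Kawada}; (d) dispatch the semidistributive/serial equivalences by the standard generalization chain serial $\Rightarrow$ semidistributive $\Rightarrow$ distributive-lattice $\Rightarrow$ uniserial on each local factor.
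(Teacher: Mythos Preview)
Your proposal is correct and follows essentially the same route as the paper: reduce to a finite product of local rings via Lemma~\ref{Abelian}, apply Corollary~\ref{local Kothe} factorwise for the K\"othe-type equivalences $(4)\Leftrightarrow(5)\Leftrightarrow(6)\Leftrightarrow(2)\Leftrightarrow(3)$, and handle $(1)$ via Proposition~\ref{Kawada} together with the observation that an Artinian Abelian ring is already basic. The only cosmetic difference is that where you sketch the serial/semidistributive equivalences $(7)$--$(12)$ by hand (Nakayama, Asano, distributive~$\Rightarrow$~uniserial on local factors), the paper dispatches them by direct citation---Tuganbaev [31, Theorem~11.6] for $(4)\Leftrightarrow(7)\Leftrightarrow(8)$, Wisbauer [32, 55.16 and 55.1(2)] for $(9)\Leftrightarrow(10)\Leftrightarrow(11)$ and $(11)\Rightarrow(2)$, and [7, Corollary~3.3] for $(2)\Leftrightarrow(3)\Leftrightarrow(12)$---so in particular the step ``every module semidistributive $\Rightarrow$ Artinian'' that you left implicit is absorbed into the cited theorem rather than argued separately.
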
  
\begin{proof} 
(1) $\Leftrightarrow$ (4). This is obtained by Proposition \ref{Kawada} and the fact that  Artinian Abelian rings are basic.

(2) $\Leftrightarrow (3) \Leftrightarrow$ (12). See [7, Corollary 3.3].
 
(3) $\Leftrightarrow$ (4). It is obtained by Lemma \ref{Abelian}, [30, Theorem 3.6] and the fact that in any Artinian local ring the Jacobson radical is the unique prime ideal.

(4) $\Leftrightarrow (5) \Leftrightarrow$ (6). These follow from Lemma \ref{Abelian} and Corollary \ref{local  Kothe}.

(4) $\Leftrightarrow (7) \Leftrightarrow$ (8). These are ontained by [31, Theorem 11.6].

$(9) \Leftrightarrow (10) \Leftrightarrow (11)$. These follow from [32, Proposition 55.16].

$(12) \Rightarrow (11)$ is clear and $(11) \Rightarrow (2)$ is obtained by [32, Proposition 55.1(2)].
\end{proof}  

An Artinian ring $R$ is said to have a {\it self}-({\it Morita}) {\it duality} if there is a Morita duality $D$ between $R$-mod, the category of finitely generated left $R$-modules, and mod-$R$, the category of finitely generated right $R$-modules. Since $R$ is Artinian, by what Morita [24] and Azumaya [5] have shown:
$R$ has a self-duality $D$ if and only if there is an injective cogenerator $_RE$ of $R$-mod and a ring isomorphism $v: R \rightarrow End(_RE)$ (which induces a right $R$-structure on $E$ via $x . r = xv(r)$ for $x \in E$ and $r \in R$) such that the dualities $D$ and  $Hom_R( -, _RE_R)$ are naturally equivalent. In the last resut we give more equivalent conditions for a very strongly K{\"o}the ring. In particular, we show that $R$ is a very strongly K{\"o}the ring if and only if $R$ is an Artinian serial ring. This equivalence is a generalization of Theorem \ref{abelian Kothe rings}, and so it is a new generalization of K{\"o}the-Cohen-Kaplansky theorem.
 
 \begin{The}\label{co-Kothe}
The following conditions are equivalent for a ring  $R$. \vspace*{0.2cm} \\
\indent {{\rm (1)}} $R$ is a very strongly K{\"o}the ring.  \\
\indent {{\rm (2)}} $R$ is an Artinian serial ring.\\
\indent {{\rm (3)}}  $R$ is of finite representation  type and has colocal type (i.e., every finitely generated \indent\indent indecomposable left $R$-module has a simple socle).\\
\indent {{\rm (4)}} Every left and right $R$-module is a direct sum of uniform modules.\\
\indent {{\rm (5)}} Every left and right $R$-module is a direct sum of extending modules.\\
\indent {{\rm (6)}} $R$ is   of finite  representation type and the left  (right) Auslander ring  of $R$ is a  $QF$-2 \indent \indent ring.\\
\indent {{\rm (8)}} Every left $R$-module is a direct sum of  (finitely generated)  indecomposable  modules \indent  \indent with   simple  socle and simple top. 
\end{The}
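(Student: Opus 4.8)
The plan is to run a cycle of implications that routes everything through the machinery already assembled: Theorem \ref{left local type} (the twelve equivalent forms of ``very strongly left K{\"o}the''), its right-handed mirror, and the duality/Auslander apparatus of Proposition \ref{equivalence}. The natural backbone is
\[
(1)\Rightarrow(3)\Rightarrow(6)\Rightarrow(2)\Rightarrow(1),\qquad (2)\Leftrightarrow(4)\Leftrightarrow(5),\qquad (1)\Leftrightarrow(8).
\]
For $(1)\Rightarrow(3)$: by definition $R$ very strongly K{\"o}the means it is very strongly left \emph{and} right K{\"o}the, so Theorem \ref{left local type}$(1)\Leftrightarrow(6)$ gives that $R$ is of finite representation type and every f.g.\ indecomposable left module has simple top; applying the right-hand version of Theorem \ref{left local type}, every f.g.\ indecomposable right module has simple top, and then Proposition \ref{equivalence}(m) applied to the duality ${\rm Hom}_R(-,Q)$ (which matches f.g.\ indecomposable right $S$-modules with the ${\rm Hom}_R(U_i,Q)$, whose socles are ${\rm Hom}_R(\mathrm{top}(U_i),Q)$) converts ``simple top on the left'' into ``simple socle on the left'', i.e.\ colocal type. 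Conversely $(3)\Rightarrow(6)$ is exactly Theorem \ref{left local type}$(6)\Leftrightarrow(10)$ read through the socle/top duality, and one spells out that the hypothesis of colocal type forces, via Proposition \ref{equivalence}(m) again, simple tops on one side and (by the finite-representation-type self-duality, Proposition \ref{equivalence}(d)) on the other, so the left \emph{and} right Auslander rings are $QF$-2.

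For $(6)\Rightarrow(2)$: from $R$ of finite representation type with left \emph{and} right Auslander ring $QF$-2, Theorem \ref{left local type}$(10)\Rightarrow(1)$ (and its right analogue) give that $R$ is very strongly left and right K{\"o}the, hence by Theorem \ref{left local type}$(12)$ (both handednesses) $R$ is Artinian right serial \emph{and} Artinian left serial, i.e.\ Artinian serial. Then $(2)\Rightarrow(1)$: an Artinian serial ring has every f.g.\ indecomposable module uniserial, in particular local on both sides, so every indecomposable left (resp.\ right) module has simple top; combined with finite representation type for serial Artinian rings (\cite[Proposition 54.3 / 55.1]{32}-style argument: bounded length) we get condition $(6)$ of Theorem \ref{left local type} on both sides, hence (1). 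The equivalences $(2)\Leftrightarrow(4)\Leftrightarrow(5)$ are quotations of the standard characterizations of Artinian serial rings by decomposition of all modules into uniform (resp.\ extending) modules — these are in [32, \S55] and the references on serial rings already cited in the paper, so no new work is needed. Finally $(1)\Leftrightarrow(8)$: condition $(8)$ says every left module is a direct sum of f.g.\ indecomposables that are simultaneously local and colocal, i.e.\ \emph{uniserial with simple top and simple socle}; one direction is immediate from $(2)$ plus the structure of modules over Artinian serial rings, and the converse recovers, module by module, that every indecomposable summand has simple top, giving back Theorem \ref{left local type}$(6)$ and a symmetric statement handling the right side via the self-duality, hence (1).

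The step I expect to be the genuine obstacle is the passage between the \emph{left-sided} hypotheses of conditions $(3)$ and $(6)$ and the \emph{two-sided} conclusion $(2)$: a priori ``finite representation type $+$ colocal type'' is a condition phrased only with left modules, yet $(2)$ (Artinian serial) is left–right symmetric. The resolution is that finite representation type already forces self-duality and the isomorphism of left and right Auslander rings (Proposition \ref{equivalence}(d)), so a $QF$-2 condition on one Auslander ring is automatically a $QF$-2 condition ``on the other side'' after transporting through the duality ${\rm Hom}_R(-,Q)$ and back; one must be careful that Proposition \ref{equivalence}(m) is applied to the correct module ($\mathrm{top}$ versus $\mathrm{soc}$) and on the correct side, and that ``right serial'' from Theorem \ref{left local type}$(12)$ together with its left mirror really does combine to ``serial''. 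I would therefore write the argument so that every invocation of Theorem \ref{left local type} is accompanied by an explicit note of whether it is the left or the right version, and collect the symmetric statements before assembling the cycle, so the two-sidedness is visibly bookkept rather than asserted. Everything else is a matter of chaining already-proved equivalences, so the proof body itself can be kept to a few lines of citations once this left/right ledger is in place.
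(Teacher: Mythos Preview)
Your plan is considerably more detailed than the paper's own proof, which is two sentences: $(2)\Leftrightarrow(5)$ is Er's theorem [10, Corollary~2], and ``the others are obtained by Theorem~\ref{left local type} and the fact that every Artinian serial ring has self-duality.'' You have correctly located the crux --- conditions $(3)$ and $(8)$ are phrased for left modules only, yet must force the two-sided conclusion $(2)$ --- but your proposed resolution of this point contains a real error.

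You assert that ``finite representation type already forces self-duality and the isomorphism of left and right Auslander rings (Proposition~\ref{equivalence}(d)).'' This misreads Proposition~\ref{equivalence}(d): it says that the left Auslander ring $T$ of $R$ is isomorphic to the right Auslander ring $T'$ of the Morita \emph{dual} $S=\mathrm{End}_R(Q)$, not to the right Auslander ring of $R$ itself. Finite representation type produces a Morita duality $R\leftrightarrow S$, but it does not force $S\cong R$; self-duality is not available from the machinery assembled in the paper for arbitrary finite-type rings. Hence when you push a left-$R$-module hypothesis through ${\rm Hom}_R(-,Q)$ you land in $\mathrm{mod}\text{-}S$, not in $\mathrm{mod}\text{-}R$, and your steps $(3)\Rightarrow(6)$ and $(8)\Rightarrow(1)$ do not close: from ``every indecomposable left $R$-module has simple socle'' you obtain that $S$ is very strongly right K{\"o}the, but not, without further input, that $R$ is. The paper's substitute is precisely the external fact that \emph{Artinian serial} rings have self-duality; once $(2)$ holds one may identify $S$ with $R$, and then conditions $(9)$, $(10)$ of Theorem~\ref{left local type} (which speak of $S$ and of $T$) become statements about $R$ and about both sides of $T$, yielding $(3)$, $(6)$, $(8)$. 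You should therefore route the argument through $(2)$ as the hub and invoke self-duality only for Artinian serial rings, not for finite-type rings in general. (A minor side point: $(2)\Leftrightarrow(5)$ is Er's result~[10], not something in [32,~\S55].)
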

\begin{proof}
The equivalence (2) $\Leftrightarrow$ (5) follows from [10, Corollary  2], and the others are obtained by Theorem \ref{left local type} and the fact that  every Artinian serial ring  has self-duality.
\end{proof}


\begin{thebibliography}{99}
{ \small
\bibitem{Anderson-Fuller} F. W. Anderson and K. R. Fuller, {\it Rings and Categories of Modules}, 2nd ed., Graduate Texts in Mathematics, vol. 13, Springer-Verlag, New York, (1992).\vspace{-3mm}
\bibitem{Asano1} K. Asano, {\it $\rm \ddot{U}$ber verallgemeinerte abelsche gruppen mit hyperkomplexen operatorenring und ihre anwendungen}, Japan J. Math., 15 (1939), 231-253.\vspace{-3mm}
\bibitem{Asano2} K. Asano, {\it $\rm \ddot{U}$ber Hauptidealringe mit Kettensatz}, Osaka J. Math. 1 (1949) 52-61.\vspace{-3mm}
\bibitem{Auslander2} M. Auslander, {\it Representation theory of artin algebras II}, Comm. Algebra 1 (1974), 269-310.\vspace{-3mm}
\bibitem{Azumaya} G. Azumaya, {\it A duality theory for injective modules}, Amer. J. Math. 81, (1959), 249-278.\vspace{-3mm}
\bibitem{Bass} H. Bass, {\it Finitistic dimension and a homological generalization of semi-primary rings}, Trans. Amer. Math. Soc. 95 (1960), 466-488.\vspace{-3mm}
\bibitem{Behboodi1} M. Behboodi, A. Ghorbani, A. Moradzadeh-Dehkordi, and S. H. Shojaee, {\it On left K{\"o}the rings and a generalization of a K{\"o}the-Cohen-Kaplansky theorem}, Proc. Amer. Math. Soc. 8 (2014), 2625-2631.\vspace{-3mm}
\bibitem{chase} S. U. Chase, {\it Direct product of modules}, Trans. Amer. Math. Soc. 97 (1960), 457-473. \vspace{-3mm}
\bibitem {Cohen-Kaplansky} I. S. Cohen, and I. Kaplansky, {\it Rings for which every module is a direct sum of cyclic modules}, Math. Z. 54 (1951), 97-101. \vspace{-3mm}
\bibitem{Er} N. Er,  {\it Rings whose modules are direct sums of extending modules}, Proc. Amer. Math. Soc. 137 (7) (2009), 2265-2271.\vspace{-3mm}
\bibitem{Faith}  C. Faith, {\it On K{\"o}the rings}, Math. Ann. 164 (1966), 207-212, 16-40.\vspace{-3mm}
\bibitem{Fazelpour1} Z. Fazelpour and A. Nasr-Isfahani, {\it Connection between representation-finite and K{\"o}the rings},  J. Algebra 514 (2018), 25-39. \vspace{-3mm}
\bibitem{Fuller4} K. R. Fuller and H. Hullinger, {\it Rings with finiteness conditions and their categories of functors}, J. Algebra 55 (1978), 94-105.\vspace{-3mm}
\bibitem{Pedro} P. A. Guil Asensio and D. Keskin T{\"u}t{\"u}nc{\"u}, {\it Rings whose pure-injective right modules are direct sum of lifting modules}, J. Algebra 383 (2013), 78-84.\vspace{-3mm}
\bibitem{Jebrel} J. M. Habeb, {\it A note on zero commutative and duo rings}, Math. J. Okayama Univ. 32 (1990), 73-76.\vspace{-3mm}
\bibitem{Hazewinkel} M. Hazewinkel and  N. Gubareni, {\it Algebras, Rings and Modules Non-commutative Algebras and Rings}, CRC Press, Boca Raton, FL, (2016).\vspace{-3mm}
\bibitem{Jain1} S. K. Jain,  A. K. Srivastava, and A. A. Tuganbaev, {\it Cyclic Modules and the Structure of Rings}, Oxford University Press, (2012).\vspace{-3mm}
\bibitem{Kawada0} Y. Kawada, {\it On K{\"o}the’s problem concerning algebras for which every indecomposable module is cyclic I}, Proc. Japan Acad. 37(6) (1961),  282-287 . DOI: 10.3792/pja/1195523673\vspace{-3mm}
\bibitem{Kawada1} Y. Kawada, {\it On K{\"o}the’s problem concerning algebras for which every indecomposable module is cyclic I}, Sci. Rep. Tokyo Kyoiku Daigaku Sect. A 7 (1962), 154-230.\vspace{-3mm}
\bibitem{Kawada2} Y. Kawada, {\it On K{\"o}the’s problem concerning algebras for which every indecomposable module is cyclic II}, Sci. Rep. Tokyo Kyoiku Daigaku Sect. A 8 (1963), 1-62.\vspace{-3mm}
\bibitem{Kawada3} Y. Kawada, {\it On K{\"o}the’s problem concerning algebras for which every indecomposable module is cyclic III}, Sci. Rep. Tokyo Kyoiku Daigaku Sect. A 8 (1965), 165-250.\vspace{-3mm}
\bibitem{Kothe} G. K{\"o}the, {\it Verallgemeinerte ablesche gruppen mit hyperkomplexem operatorenring}, Math. Z. 39 (1935), 31-44.\vspace{-3mm}
\bibitem{Lam} T.Y. Lam and Alex S. Dugas {\it Quasi-duo rings and stable range descent}, Journal of Pure and Applied Algebra 195 (2005), 243-259.\vspace{-3mm}
\bibitem{Morita} K. Morita, {\it Duality for modules and its applications to the theory of rings with minimum condition}, Sci. Rep. Tokyo Kyoiku Daigaku Sect. A 6 (1958), 83-142.\vspace{-3mm}
\bibitem{Nakayama1} T. Nakayama, {On Frobeniusean algebras II}, Ann. of Math., 42, 1-27 (1941).\vspace{-3mm}
\bibitem{Nakayama2} T. Nakayama, {\it Note on uni-serial and generalized uni-serial rings}, Proc. Imp. Acad. Tokyo 16 (1940), 285-289.\vspace{-3mm}
\bibitem{Non-associative} L. Sabinin, L. Sbitneva and I. Shestakov, {\it Non-associative Algebra and its Applications}, Lecture Notes in Pure and Applied Mathematics, vol. 246, Chapman and Hall/CRC, Boca Raton, FL, (2006).\vspace{-3mm}
\bibitem{Singh} S. Singh and H. Al-Bleahed, {\it Rings with indecomposable modules local}, Beitrage zur ¨Alg. Geom., 45 (2004), 239-251.\vspace{-3mm}
\bibitem{Tachikawa1} H. Tachikawa, {\it On rings for which every indecomposable right module has a unique maximal submodule}, Math. Z., 71 (1959), 200-222.\vspace{-3mm}
\bibitem{Tolooei} Y. Tolooei and M. R. Vedadi, {\it On rings whose modules have nonzero homomorphisms to nonzero submodules}, Publ. Mat. 57 (2013), 107-122.\vspace{-3mm}
\bibitem{tuganbaev} A. Tuganbaev, {\it Semidistributive Modules and Rings}, Springer Netherlands (Kluwer), Dordrecht-Boston-London, (1998).\vspace{-3mm}
\bibitem{Wisbauer} R. Wisbauer, {\it Foundations of Module and Ring Theory}, Gordon and Branch Reading, (1991).\vspace{-3mm}
\bibitem{Yu} H. P. Yu,  {\it On quasi-duo rings}, Glasgow Math. J. 37(1995), 21-31.\vspace{-3mm}
\bibitem{Zimmermann & Zimmermann} B. Zimmermann-Huisgen and W. Zimmermann, {\it On the sparsity of representations of rings of pure global dimension zero}, Trans. Amer. Math. Soc. 320 (1990), 695-711.\vspace{-3mm}
 }


 \end{thebibliography}
 \end{document}